\newtheorem{proposition}{Proposition}[section]
\newtheorem{theorem}[proposition]{Theorem}
\newtheorem{lemma}[proposition]{Lemma}
\newtheorem{corollary}[proposition]{Corollary}
\newtheorem{conjecture}[proposition]{Conjecture}
\newtheorem{question}[proposition]{Question}
\theoremstyle{definition}
\newtheorem{remark}[proposition]{Remark}
\newtheorem{definition}[proposition]{Definition}
\newcommand{\bR}{\mathbb{R}}
\newcommand{\bA}{\mathbb{A}}
\newcommand{\bQ}{\mathbb{Q}}
\newcommand{\bZ}{\mathbb{Z}}
\newcommand{\bN}{\mathbb{N}}
\newcommand{\cX}{\mathcal{X}}
\newcommand{\cY}{\mathcal{Y}}
\newcommand{\cZ}{\mathcal{Z}}
\newcommand{\cO}{\mathcal{O}}
\newcommand{\cL}{\mathcal{L}}
\newcommand{\cI}{\mathcal{I}}
\newcommand{\cF}{\mathcal{F}}
\newcommand{\cE}{\mathcal{E}}
\newcommand{\fR}{\mathfrak{R}}
\newcommand{\rd}{\mathrm{d}}
\newcommand{\hbeta}{\widehat{\beta}}
\newcommand{\lct}{\mathrm{lct}}
\newcommand{\vol}{\mathrm{vol}}
\newcommand{\ord}{\mathrm{ord}}
\newcommand{\Dtc}{\Delta_{\mathrm{tc}}}
\newcommand{\Fut}{\mathrm{Fut}}
\newcommand{\tc}{{\rm {tc}}}
\title{Some criteria for uniform K-stability}
\author{Chuyu Zhou}
\address{Beijing International Center for Mathematical Research,Peking University, Beijing 100871, China}
\email{chuyuzhou@pku.edu.cn}
\author{Ziquan Zhuang}
\address{Department of Mathematics, Princeton University, Princeton, NJ 08544, USA.}
\email{zzhuang@math.princeton.edu}
\date{} % delete this line to display the current date
\begin{document}
\begin{abstract}
We prove some criteria for uniform K-stability of log Fano pairs. In particular, we show that uniform K-stability is equivalent to $\beta$-invariant having a positive lower bound. Then we study the relation between optimal destabilization conjecture and the conjectural equivalence between uniform K-stability and K-stability in twisted setting.
\end{abstract}

\maketitle
\tableofcontents

\section{Introduction}\label{section1}

K-stability is an important concept introduced in \cite{Tian97} (and later algebraically reformulated in \cite{Don02}) to test whether there is a KE metric on a projective Fano manifold (see in particular \cites{Tian15, CDS15a, CDS15b, CDS15c}). However, it's difficult to check K-stability of a Fano manifold and various equivalent but simpler criteria have been introduced in terms of special test configurations \cite{LX14}, valuations and filtrations \cites{Fuj19,Li17} and stability thresholds (or $\delta$-invariants) \cites{FO18,BJ17}.

% An important progress to test K-stability is the special test configuration theory developed in \cite{LX14}, which says one only needs to check special test configurations to confirm K-stability. Special test configurations are those whose central fibers are also klt Fano varieties. The generalized Futaki invariant of these kinds of test configurations is easy to compute. 

% Since the work of K.Fujita \cite{Fuj19}, a new viewpoint by using valuations and filtrations is really impressive. K.Fujita found a new invariant, i.e. $\beta$-invariant and gave a valuative criterion to check K-stability, see section \ref{section2}. After that, \cites{FO18} introduced  $\delta$-invariant,  which seems to be a more precise K-stability threshold, \cites{FO18,BJ17}.

% In this paper, we will study these invariants further to give some new criteria to test K-stability. In \cite{Fuj19}, the author uses positivity of $\beta$-invariant to check K-stability. Parallelly, we can use $\delta$-invariant to give a corresponding criterion, which essentially has been discovered by K.Fujita.

% \begin{theorem}\label{1}
% Suppose $(X,\Delta)$ is a given klt log Fano pair, then $(X,\Delta)$ is uniformly K-stable if and only if there is a positive real number $0<\epsilon<1$ such that 
% $$\frac{A_{X,\Delta}(E)}{S_{X,\Delta}(E)}\geq 1+\epsilon$$
% for any special divisorial valuation $\ord_E$ over X.
% \end{theorem}

% The definition of special divisorial valuation will be given in section \ref{section2}. 

In this note, we give some more criteria for uniform K-stability from these perspectives. Since uniform K-stability has certain openness property, i.e. K-semistability is preserved after small perturbation of the boundary divisor (see \cite{Fuj17}), we first have the following criterion (note that the direction $(1)\Rightarrow (2)$ has been known by \cite{Fuj17}).

\begin{theorem}[=Theorem \ref{uks via boundary}] \label{2}
Let $(X,\Delta)$ be a log Fano pair. The following are equivalent:
\begin{enumerate}
\item $(X,\Delta)$ is uniformly K-stable.

\item There exists a $\epsilon>0$ such that $(X,\Delta+\epsilon D)$ is K-semistable for any $D\in|-K_X-\Delta|_\mathbb{R}$.
\end{enumerate}
\end{theorem}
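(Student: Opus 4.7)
The direction $(1) \Rightarrow (2)$ is Fujita's openness \cite{Fuj17}, so I focus on $(2) \Rightarrow (1)$. The plan is to use the valuative characterization of uniform K-stability: combining \cite{Fuj19}, \cite{Li17} and \cite{BJ17}, $(X, \Delta)$ is uniformly K-stable if and only if there exists $\eta > 0$ such that
\[
A_{X,\Delta}(v) - S_{X,\Delta}(v) \geq \eta\bigl(T_{X,\Delta}(v) - S_{X,\Delta}(v)\bigr)
\]
for every divisorial valuation $v$ over $X$ with $A_{X,\Delta}(v) < \infty$, where $T_{X,\Delta}(v) := \sup\{v(D) : D \in |-K_X - \Delta|_{\bR}\}$ is the pseudo-effective threshold. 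I will verify this inequality with $\eta = \epsilon$.

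To unpack (2), fix $D \in |-K_X - \Delta|_{\bR}$. Because $-K_X - \Delta - \epsilon D \equiv_{\bR} (1-\epsilon)(-K_X - \Delta)$, the $S$-invariant scales as $S_{X,\Delta + \epsilon D}(v) = (1-\epsilon)\, S_{X,\Delta}(v)$, while additivity of the log discrepancy gives $A_{X,\Delta + \epsilon D}(v) = A_{X,\Delta}(v) - \epsilon\,v(D)$. The valuative criterion for K-semistability of $(X, \Delta + \epsilon D)$ therefore reads
\[
A_{X,\Delta}(v) - \epsilon\,v(D) \geq (1-\epsilon)\,S_{X,\Delta}(v)
\]
for every divisorial $v$. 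Taking the supremum of the left-hand side over $D \in |-K_X - \Delta|_{\bR}$ and using $\sup_{D} v(D) = T_{X,\Delta}(v)$, I obtain
\[
A_{X,\Delta}(v) - S_{X,\Delta}(v) \geq \epsilon\bigl(T_{X,\Delta}(v) - S_{X,\Delta}(v)\bigr),
\]
which is exactly the uniform K-stability criterion with $\eta = \epsilon$.

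The main subtle point is the valuative characterization of uniform K-stability invoked in the first paragraph: the normalization by $T - S$ (the non-Archimedean $J$-functional) rather than the naive normalization by $S$. The equivalence between these two formulations is nontrivial and is closely tied to the paper's main theme of $\beta$-invariant lower bounds; it ultimately rests on Fujita's analysis \cite{Fuj19} comparing the two normalizations. A routine technical point is that in taking $\sup_D v(D) = T(v)$ one should restrict to $D$ for which $(X, \Delta + \epsilon D)$ is genuinely a klt log Fano pair so that the hypothesis applies; this causes no problem for $\epsilon$ small enough by openness of klt, and the extremal case is obtained by approximation.
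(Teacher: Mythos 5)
Your argument for $(2)\Rightarrow(1)$ is exactly the paper's: expand $\beta_{X,\Delta+\epsilon D}(E)=A_{X,\Delta}(E)-\epsilon\,\ord_E(D)-(1-\epsilon)S_{X,\Delta}(E)\ge 0$, take the supremum over $D$ to replace $\ord_E(D)$ by $T_{X,\Delta}(E)$, and conclude $\beta_{X,\Delta}(E)\ge\epsilon\, j_{X,\Delta}(E)$, which is the $J$-normalized valuative criterion (equivalent to the $S$-normalized one by the bounds $\tfrac1n S\le j\le nS$). For $(1)\Rightarrow(2)$ you defer to Fujita's openness, which the paper also acknowledges as known, though it additionally spells out the explicit choice $\epsilon=\mu/(n+1)$ via the inequality $S\le\tfrac{n}{n+1}T$; the proof is correct and essentially identical to the paper's.
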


Our next criterion gives a way to test uniform K-stability using only $\beta$-invariant (see Section \ref{section2} for related definitions): 

\begin{theorem}\label{3}
Let $(X,\Delta)$ be a log Fano pair. The following are equivalent:
\begin{enumerate}
\item $(X,\Delta)$ is uniformly K-stable. 

\item There exists $\epsilon>0$ such that $\beta_{X,\Delta}(E)\geq \epsilon$ for any divisor $E$ over X. 

\item There exists $\epsilon>0$ such that $\beta_{X,\Delta}(E)\geq \epsilon$ for any dreamy divisor $E$ over X. 

\item There exists $\epsilon>0$ such that $\beta_{X,\Delta}(E)\geq \epsilon$ for any weakly special divisor $E$ over X. 
\end{enumerate}
\end{theorem}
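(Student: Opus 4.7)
The implications $(2) \Rightarrow (3) \Rightarrow (4)$ are immediate, since each subsequent condition tests a smaller class of divisors (weakly special $\subset$ dreamy $\subset$ all). The content lies in $(1) \Rightarrow (2)$ and $(4) \Rightarrow (1)$.

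For $(1) \Rightarrow (2)$, the plan is to combine the $\delta$-invariant characterization of uniform K-stability with the positivity of the minimal log discrepancy. Uniform K-stability yields some $\epsilon_0 > 0$ with $A_{X,\Delta}(E) \geq (1+\epsilon_0) S_{X,\Delta}(E)$ for every divisor $E$ over $X$; equivalently, $\beta_{X,\Delta}(E) \geq \tfrac{\epsilon_0}{1+\epsilon_0} A_{X,\Delta}(E)$. Since $(X,\Delta)$ is klt, $\mathrm{mld}(X,\Delta) > 0$, which gives the uniform lower bound $\beta_{X,\Delta}(E) \geq \tfrac{\epsilon_0 \cdot \mathrm{mld}(X,\Delta)}{1+\epsilon_0} > 0$.

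For $(4) \Rightarrow (1)$, the plan is to invoke Theorem~\ref{2}: I will produce $\eta > 0$ such that $(X,\Delta + \eta D)$ is K-semistable for every $D \in |-K_X - \Delta|_{\bR}$. By the weakly special destabilization principle (coming from the MMP-theoretic reduction of test configurations), K-semistability of $(X, \Delta + \eta D)$ is equivalent to $\beta_{X, \Delta + \eta D}(F) \geq 0$ for every weakly special divisor $F$ over $(X, \Delta + \eta D)$. Any such $F$ is also weakly special over $(X, \Delta)$, since subtracting the effective divisor $\eta D_Y$ from the boundary preserves both the plt condition and $f$-ampleness in the definition of a weakly special extraction. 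Using the scaling $-K_X - \Delta - \eta D \sim_{\bR} (1-\eta)(-K_X - \Delta)$, we get $S_{X, \Delta + \eta D}(F) = (1-\eta)\, S_{X,\Delta}(F)$ and $A_{X, \Delta + \eta D}(F) = A_{X,\Delta}(F) - \eta\, \ord_F(D)$, which combine to
\[
\beta_{X, \Delta + \eta D}(F) \;=\; \beta_{X,\Delta}(F) - \eta\bigl(\ord_F(D) - S_{X,\Delta}(F)\bigr) \;\geq\; \epsilon - \eta\, T_{X,\Delta}(F),
\]
invoking (4). By boundedness of the class of weakly special divisors over the fixed pair $(X,\Delta)$, the quantity $T_{X,\Delta}(F)$ is uniformly bounded by some constant $C$, so choosing $0 < \eta < \epsilon / C$ forces $\beta_{X, \Delta + \eta D}(F) > 0$ for every such $F$, as required.

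The hard part will be the two boundedness-type inputs needed for $(4) \Rightarrow (1)$: (i) that K-unstable degenerations of $(X, \Delta + \eta D)$ are witnessed by weakly special divisors, and (ii) that $T_{X,\Delta}$ is uniformly bounded on the class of weakly special divisors over $(X, \Delta)$. Both should follow from recent MMP-theoretic work on (weakly) special test configurations together with Birkar's boundedness theorem for Fano varieties in a fixed class.
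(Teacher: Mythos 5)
Your implications $(2)\Rightarrow(3)\Rightarrow(4)$ are fine modulo the (mild) point that weakly special divisors need to be known to be dreamy, and your $(1)\Rightarrow(2)$ is essentially the paper's argument: it uses the explicit bound $A_{X,\Delta}(E)\ge 1/r$ (for $r(K_X+\Delta)$ Cartier) where you invoke the positivity of the minimal log discrepancy. The genuine gap is in $(4)\Rightarrow(1)$: your input (ii), that $T_{X,\Delta}(F)$ is uniformly bounded over all weakly special divisors $F$, is false, and no boundedness theorem will rescue it. Already over $\bP^2$ (or any Fano with infinite automorphism group, or one admitting infinitely many non-isomorphic special degenerations) there are special divisors $F_k$ with $A_{X,\Delta}(F_k)\to\infty$; since these pairs are K-semistable one has $S_{X,\Delta}(F_k)\ge A_{X,\Delta}(F_k)/\delta\to\infty$, and then $T_{X,\Delta}(F_k)\ge\frac{n+1}{n}S_{X,\Delta}(F_k)\to\infty$ by Remark \ref{remark of relation}. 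Note also that the estimate you would actually need to close the argument, namely $\beta_{X,\Delta}(F)\ge \eta\,T_{X,\Delta}(F)$ for a uniform $\eta>0$, is (after replacing $T$ by the comparable quantity $j=T-S$) precisely the valuative formulation of uniform K-stability you are trying to establish, so the route is circular at its core. The paper is well aware of this obstruction: weakly special divisors with small $\beta$ are lc places of $N$-complements (Theorem \ref{lc complement}), and the boundedness of the resulting family of log Calabi--Yau pairs is raised there as an open Question whose affirmative answer would already yield ``K-stable $=$ uniformly K-stable.''

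The paper closes the loop by a different mechanism. For $(2)\Rightarrow(1)$ it uses $m$-basis type divisors: by \cite[Corollary 3.6]{BJ17} one has $c_m\ord_E(D_m)\le S_{X,\Delta}(E)$ with $c_m\to 1$, so condition (2) makes every pair $(X,\Delta+c_mD_m)$ $\epsilon$-lc; Birkar's effective lower bound for log canonical thresholds of $\epsilon$-lc pairs then gives $\lct(X,\Delta+c_mD_m;D_m)\ge t$ for a uniform $t>0$, hence $\delta_m(X,\Delta)\ge c_m+t$ and $\delta(X,\Delta)>1$. The passage from (4) back to (2)/(3) is then handled not by comparing $\beta$-invariants across different boundaries, but by showing that every divisor $E$ with $\beta_{X,\Delta}(E)<\epsilon_0$ is automatically dreamy and induces a weakly special test configuration (Lemma \ref{lem:dreamy} and Theorem \ref{weakly special}, via extraction \`a la BCHM and the ACC of log canonical thresholds), so that hypothesis (4) already constrains all divisors that could violate (2).
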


It is well expected that K-stability is equivalent to uniform K-stability. This statement is proved to be equivalent to the existence of divisorial valuation computing $\delta$-invariant when $\delta(X,\Delta)=1$ (see Section \ref{section5}). In \cite{BLZ19}, an algebraic twisted K-stability theory is developed to study $\bQ$-Fano varieties that are not uniformly K-stable. We introduce concepts of twisted K-stability and twisted uniform K-stability and similarly expect they are equivalent. We then explore the relation between this equivalence 
and the existence of divisorial valuation computing $\delta$-invariant when $\delta(X,\Delta)<1$. In particular, we prove:

\begin{theorem}[=Theorem \ref{maximal twist}]\label{4}
Let $(X,\Delta)$ be a log Fano pair with $\delta(X,\Delta)\leq 1$, then for any $0<\mu<\delta(X,\Delta)$, $(X,\Delta)$ is $\mu$-twisted uniformly K-stable. Besides, $(X,\Delta)$ is $\delta(X,\Delta)$-twisted K-semistable but not $\delta(X,\Delta)$-twisted uniformly K-stable.
\end{theorem}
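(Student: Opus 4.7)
My plan is to reduce the theorem to the defining formula $\delta(X,\Delta) = \inf_{E} A_{X,\Delta}(E)/S_{X,\Delta}(E)$, together with a twisted analog of the $\beta$-criterion of Theorem~\ref{3}. Write $A(E) := A_{X,\Delta}(E)$ and $S(E) := S_{X,\Delta}(E)$, and set $\beta^{\mu}(E) := A(E) - \mu\, S(E)$. Under the notion of twisted K-stability to be introduced in Section~\ref{section5}, I expect $\mu$-twisted K-semistability to correspond to $\beta^{\mu}(E)\geq 0$ for every divisor $E$ over $X$, and $\mu$-twisted uniform K-stability to correspond to the existence of a uniform $\epsilon>0$ with $\beta^{\mu}(E)\geq \epsilon\, S(E)$ for every such $E$ (this will be the twisted $\beta$-criterion, proved in parallel with Theorem~\ref{3}).

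For the first statement, fix $0<\mu<\delta:=\delta(X,\Delta)$. Since $A(E)\geq \delta\, S(E)$ for every divisor $E$ over $X$, the straightforward inequality
\[
\beta^{\mu}(E) = A(E) - \mu\, S(E) \geq (\delta-\mu)\, S(E)
\]
holds uniformly in $E$. Taking $\epsilon := \delta-\mu > 0$ gives the twisted uniform K-stability of $(X,\Delta)$.

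For the second statement, $\delta$-twisted K-semistability is immediate from the definition of $\delta$, since $\beta^{\delta}(E) = A(E)-\delta\, S(E)\geq 0$ for every $E$. To rule out $\delta$-twisted uniform K-stability, I would exploit the fact that $\delta$ is an infimum: by its very definition there is a sequence of divisors $E_{i}$ over $X$ with $A(E_{i})/S(E_{i})\to \delta$, and hence $\beta^{\delta}(E_{i})/S(E_{i})\to 0$. Any uniform positive lower bound $\beta^{\delta}\geq \epsilon\, S$ is then violated for $i$ large enough.

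The main obstacle is not the numerical manipulation above, which is routine, but rather the setup: proving the twisted analog of the $\beta$-criterion that relates $\mu$-twisted uniform K-stability (defined through twisted test configurations or non-Archimedean functionals) to a uniform inequality of the form $\beta^{\mu}\geq \epsilon\, S$ on divisorial valuations. I expect this to be a direct adaptation of the proof of Theorem~\ref{3}, together with a careful check that the perturbation arguments for K-semistability used there (via Theorem~\ref{2}) still go through after twisting; once this is in place, both parts of the theorem follow from the elementary estimates above.
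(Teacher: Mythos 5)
Your numerical reductions are fine, but they are not where the content of the theorem lies, and the step you defer --- the ``twisted $\beta$-criterion'' equating $\mu$-twisted (uniform) K-stability with $\beta^{\mu}(E)=A(E)-\mu S(E)\geq 0$ (resp.\ $\geq\epsilon S(E)$) for \emph{all} divisors $E$ over $X$ --- is a genuine gap, not a routine adaptation of Theorem \ref{3}. The proof of Theorem \ref{3} leans on the untwisted equivalence ``uniformly K-stable $\Leftrightarrow\delta>1$'' and on basis-type divisor/lc-threshold arguments for the pair $(X,\Delta)$ itself; in the twisted setting the Futaki invariant is a supremum over boundaries $D\in|-K_X-\Delta|_{\bQ}$, and relating it to a valuative inequality for arbitrary (not necessarily dreamy) $E$ requires the twisted special test configuration machinery of \cite{BLZ19}. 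In particular, for your disproof of $\delta$-twisted uniform K-stability you take a sequence $E_i$ with $A(E_i)/S(E_i)\to\delta$ straight from the infimum defining $\delta$; but such $E_i$ need not induce test configurations, so without the full (unproven) criterion you cannot feed them into the definition of twisted uniform K-stability. The paper instead invokes \cite[Theorem 4.3]{BLZ19} to produce a sequence of \emph{special test configurations} $(\cX_i,\cL_i)$ with $A(v_{\cX_{i,0}})/S(v_{\cX_{i,0}})\to\delta$, chooses $D$ general so that $\Fut_{1-\delta}(\cX_i,\cL_i)=A(v_{\cX_{i,0}})-\delta S(v_{\cX_{i,0}})$, and then runs your limiting argument.

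For the positive direction ($0<\mu<\delta$) the paper also takes a different and shorter route that you should be aware of: by \cite[Theorem C]{BL18b} there is a \emph{single} $D\in|-K_X-\Delta|_{\bQ}$ with $(X,\Delta+(1-\mu)D)$ uniformly K-stable, and since $\Fut_{1-\mu}$ is a supremum over such $D$, the bound $\Fut_{1-\mu}(\cX,\cL)\geq\Fut(\cX,(1-\mu)\cD;\cL)\geq\epsilon J(\cX,\cL)$ follows for every normal test configuration with no twisted valuative criterion needed. If you want to keep your valuative framework, you must either prove the twisted criterion (essentially re-deriving \cite[Theorems 1.5 and 1.6]{BLZ19}) or replace it by these two external inputs; as written, the proposal assumes precisely the statement that makes the theorem nontrivial.
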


This is a refinement of the twisted valuative criterion established in \cite{BLZ19}. Using this result, we establishes the equivalence between the existence of divisorial $\delta$-minimizer and the conjecture ``K-stable = Uniformly K-stable'' in the twisted setting.

\begin{theorem}[=Theorem \ref{equivalence}]\label{5}
For a log Fano pair $(X,\Delta)$ with $\delta(X,\Delta)\leq1$, that twisted K-stable is equivalent to twisted uniformly K-stable, is equivalent to the existence of a divisorial valuation computing $\delta(X,\Delta)$.
\end{theorem}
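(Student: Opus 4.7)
Set $\delta=\delta(X,\Delta)$. By Theorem \ref{maximal twist}, $(X,\Delta)$ is $\mu$-twisted uniformly K-stable (hence \emph{a fortiori} $\mu$-twisted K-stable) for every $\mu<\delta$, while for $\mu>\delta$ neither property holds. Thus the two notions can only disagree at $\mu=\delta$, where Theorem \ref{maximal twist} already tells us that $(X,\Delta)$ is $\delta$-twisted K-semistable but not $\delta$-twisted uniformly K-stable. Consequently, the statement ``twisted K-stable $\Leftrightarrow$ twisted uniformly K-stable'' holds for $(X,\Delta)$ precisely when $(X,\Delta)$ fails to be $\delta$-twisted K-stable. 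The theorem therefore reduces to the equivalence
\[
(X,\Delta)\text{ is \emph{not} $\delta$-twisted K-stable}\iff\text{some divisorial valuation computes }\delta.
\]

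\textbf{Divisorial minimizer $\Rightarrow$ destabilization.} For one direction, assume $E$ is a divisor over $X$ with $A_{X,\Delta}(E)=\delta\,S_{X,\Delta}(E)$. Because $\delta\leq 1$, the degeneration techniques of \cite{BLZ19} (the same machinery that underlies the twisted valuative criterion refined in Theorem \ref{maximal twist}) produce a non-trivial weakly special test configuration of $(X,\Delta)$ whose induced valuation is proportional to $\ord_E$. The $\delta$-twisted generalized Futaki invariant of this test configuration is a positive multiple of $A_{X,\Delta}(E)-\delta\,S_{X,\Delta}(E)=0$, so the test configuration certifies failure of $\delta$-twisted K-stability.

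\textbf{Failure of $\delta$-twisted K-stability $\Rightarrow$ divisorial minimizer.} Conversely, if $(X,\Delta)$ is not $\delta$-twisted K-stable, then combined with $\delta$-twisted K-semistability this yields a non-trivial test configuration whose $\delta$-twisted Futaki invariant equals zero. Applying the twisted analogue of the Li--Xu MMP reduction developed in \cite{BLZ19}, we replace it by a weakly special test configuration with the same vanishing invariant; the divisorial valuation $E$ associated to its central fibre then satisfies $A_{X,\Delta}(E)=\delta\,S_{X,\Delta}(E)$, i.e.\ computes $\delta$.

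\textbf{Main obstacle.} The delicate point common to both directions is the twisted Li--Xu reduction: one must reduce an arbitrary test configuration of zero twisted Futaki invariant to a weakly special one while preserving vanishing, and dually convert a divisorial $\delta$-minimizer into such a weakly special test configuration. Both statements should fall within the scope of the machinery of \cite{BLZ19} adapted to the twisted setting, and the conditions $\delta\leq 1$ together with the refined information supplied by Theorem \ref{maximal twist} are what make the reduction effective. Once these structural statements are in place, the rest of the argument is bookkeeping.
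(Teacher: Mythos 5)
Your proposal is correct and follows essentially the same route as the paper: reduce via Theorem \ref{maximal twist} to the single statement that the equivalence holds iff $(X,\Delta)$ fails to be $\delta$-twisted K-stable, then invoke the correspondence from \cite{BLZ19} between divisorial $\delta$-minimizers (which are dreamy and induce test configurations with vanishing $\delta$-twisted Futaki invariant) and non-trivial test configurations with zero twisted Futaki invariant (which reduce to special ones whose central fibre computes $\delta$). The paper makes this precise by citing Theorems 1.1 and 3.9 of \cite{BLZ19} for the two directions, exactly the "twisted Li--Xu reduction" you identify as the key input.
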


The paper is organized as follows. In Section \ref{section2}, we recall the notion and some preliminaries that will be used later. In Section \ref{section3}, we prove the criteria for uniform K-stability, i.e.  Theorems \ref{2} and \ref{3}. In Section \ref{section4}, we introduce the concept of twisted K-stability and twisted uniform K-stability and prove Theorem \ref{4}. In Section \ref{section5}, we prove Theorem \ref{5}.\\

\noindent
\subsection*{Acknowledgement}
The first author would like to thank his advisor Chenyang Xu for his constant support and encouragement. The second author would like to thank his advisor J\'anos Koll\'ar for constant support and encouragement. Both author would like to thank Yuchen Liu for helpful discussions and for suggesting them to write down this note and Harold Blum for useful discussion. Most of this work is done during both authors' visit at MSRI, which is gratefully acknowledged.

\section{Notion and preliminaries}\label{section2}

We work over $\mathbb{C}$. We refer to \cites{KM98,Kollar13} for the definition of singularities of pairs. A projective normal variety $X$ is called $\bQ$-Fano if $-K_X$ is an ample $\mathbb{Q}$-Cartier divisor and $X$ admits klt singularities. A pair $(X,\Delta)$ is called log Fano if $-K_X-\Delta$ is an ample $\mathbb{Q}$-Cartier divisor and $(X,\Delta)$ is klt. The $\bR$-linear system of an $\bR$-Cartier $\bR$-divisor $L$ is defined to be $|L|_\bR = \{D\ge 0\,|\,D\sim_\bR L\}$. Similar one can define the $\bQ$-linear system $|L|_\bQ$ of a $\bQ$-Cartier $\bQ$-divisor.

\subsection{Test configurations}

Let $(X,\Delta)$ be a log Fano pair. A test configuration $(\cX,\Dtc;\cL)$ of $(X,\Delta;-K_X-\Delta)$ consists of the following data:

\begin{enumerate}

 \item A projective morphism $\pi: \mathcal{X}\rightarrow \mathbb{A}^1$ and an effective $\bQ$-divisor $\Delta_{\tc}$ on $\mathcal{X}$.

\item A relatively ample $\mathbb{Q}$-line bundle $\mathcal{L}$ on $\mathcal{X}$.

\item A $\mathbb{C}^*$-action on $(\mathcal{X},\Delta_{\tc};r\mathcal{L})$ for some sufficiently divisible integer $r$ such that $(\mathcal{X}^*,\Delta_{\tc}^*;r\cL|_{\cX^*})$ is $\mathbb{C}^*$-equivariantly isomorphic to $(X,\Delta;-r(K_X+\Delta))\times (\mathbb{A}^1\setminus 0)$ via the projection $\pi$, where $\mathcal{X}^*=\mathcal{X}\setminus \mathcal{X}_0$ and $\Delta_{\tc}^*={\Delta_{\tc}}|_{\mathcal{X}^*}$.

%\item $\cL|_{\cX^*}\sim_\bQ -(K_{\cX^*/\bC^*}+\Dtc^*)$.

% \item The fiber $(\mathcal{X}_t,{\Delta_{\tc}}_t;\mathcal{L}_t)$ is isomorphic to $(X,\Delta;-K_X-\Delta)$ for $t\neq 0$.
\end{enumerate}

%If $\mathcal{X}$ is normal, we say $(\mathcal{X},\Delta_{\tc};\mathcal{L})$ is a normal test configuration. 
Unless otherwise specified, all test configurations considered in this note are assumed to be normal, i.e. $\cX$ is normal in the above definition. One can glue $(\mathcal{X},\Dtc)$ and $(X,\Delta)\times (\mathbb{P}^1\setminus 0)$ along their common open subset $(X,\Delta)\times (\mathbb{A}^1 \setminus 0)$ to get a natural compactification $(\overline{\mathcal{X}},\overline{\Delta}_{\tc};\overline{\mathcal{L}})$.
A test configuration $(\mathcal{X},\Delta_{\tc};\mathcal{L})$ is called special (resp. weakly special) if $(\mathcal{X},\Delta_{\tc}+\mathcal{X}_0)$ is \emph{plt} (resp. \emph{lc}) and $\mathcal{L}\sim_\mathbb{Q}-K_{\mathcal{X}/\mathbb{A}^1}-\Delta_{\tc}$.

A test configuration $(\mathcal{X},\Delta_{\tc};\mathcal{L})$ is trivial if it's $\mathbb{C}^*$-equivariant to $((X,\Delta)\times \mathbb{A}^1,-K_{X\times \mathbb{A}^1/\mathbb{A}^1})$. It is said to be of product type if it's induced by a diagonal $\mathbb{C}^*$-action on $(X,\Delta)\times \mathbb{A}^1$ given by a one parameter subgroup of ${\rm Aut}(X)$.

\subsection{K-stability}

% From now on, we only consider normal test configurations, and all test configurations are assumed to be compact where we leave out the overline for convenience.

Given a test configuration $(\mathcal{X},\Delta_{\tc};\mathcal{L})$ of an $n$-dimensional log Fano pair $(X,\Delta)$, its \emph{generalized Futaki invariant} is defined as follows:

$$\Fut(\mathcal{X},\Delta_{\tc};\mathcal{L}):=\frac{n \overline{\cL}^{n+1}}{(n+1)(-K_X-\Delta)^n}+\frac{\overline{\cL}^n\cdot (K_{\overline{\cX}/\mathbb{P}^1}+\overline{\Delta}_{\tc})}{(-K_X-\Delta)^n}. $$

We say $(X,\Delta)$ is \emph{K-semistable} if $\Fut(\mathcal{X},\Delta_{\tc};\mathcal{L})\geq 0$ for any normal test configuration $(\mathcal{X},\Delta_{\tc};\mathcal{L})$. 
We say $(X,\Delta)$ is \textit{K-stable} if $\Fut(\mathcal{X},\Delta_{\tc};\mathcal{L})> 0$ for any non-trivial normal test configuration $(\mathcal{X},\Delta_{\tc};\mathcal{L})$.
We say $(X,\Delta)$ is \textit{K-polystable} if it is K-semistable and $\Fut(\mathcal{X},\Delta_{\tc};\mathcal{L})> 0$ for any non product type normal test configuration $(\mathcal{X},\Delta_{\tc};\mathcal{L})$.

To define uniform K-stability, we introduce $J$-invariant of a test configuration $(\mathcal{X},\Delta_{\tc};\mathcal{L})$ as follows \cites{BHJ17, Fuj19}:
$$J(\mathcal{X},\Delta_{\tc};\mathcal{L}):= \frac{\Pi^*(-K_{X\times \mathbb{P}^1/\mathbb{P}^1}-\Delta_{\mathbb{P}^1})^n\cdot\Theta^*\overline{\cL}}{(-K_X-\Delta)^n}-\frac{\overline{\cL}^{n+1}}{(n+1)(-K_X-\Delta)^n},$$
where $\Pi: \mathcal{Z}\rightarrow X\times \mathbb{P}^1$ and $\Theta: \mathcal{Z}\rightarrow \mathcal{X}$ denote the normalization of the graph of $X\times \mathbb{P}^1\dashrightarrow \mathcal{X}$.

We say $(X,\Delta)$ is \textit{uniformly K-stable} if there is a positive number $0<\epsilon<1$ such that $\Fut(\mathcal{X},\Delta_{\tc};\mathcal{L})\geq \epsilon J(\mathcal{X},\Delta_{\tc};\mathcal{L})$ for any normal test configuration.

\subsection{Dreamy divisor and special divisor}

In this subsection, we introduce two kinds of divisors which will appear frequently later.

Let $(X,\Delta)$ be a log Fano pair. We say $E$ is a divisor over $X$ if there is a birational model $\sigma:Y\rightarrow X$ such that $E$ is a prime divisor on $Y$. If $E\subset X$ we just let $\sigma={\rm id}_X$.

\begin{definition}[\cite{Fuj19}]
We say that $E$ is a dreamy divisor or $\ord_E$ is a dreamy valuation over $X$ if $\bigoplus_{i,j\in\bN} H^0(X,-ir\sigma^*(K_X+\Delta)-jE)$ is finitely generated, where $r$ is a positive integer such that $-r(K_X+\Delta)$ is Cartier.
\end{definition}

\begin{definition}
We say that $E$ is a (weakly) special divisor or $\ord_E$ is a (weakly) special valuation over $X$ if 
it's induced by a non-trivial (weakly) special test configuration $(\mathcal{X},\Delta_{\tc};\mathcal{L})$, i.e. $\ord_E$ is proportional to the restriction of $\ord_{\mathcal{X}_0}$ (since $\ord_{\mathcal{X}_0}$ is a divisorial valuation on the function field $K(\mathcal{X})=K(X\times \mathbb{A}^1)$, we just restrict the valuation to $K(X)$ to get a divisorial valuation over $X$; see \cite{BHJ17}).
\end{definition}

We have the following characterization of dreamy divisors (see \cite[Theorem 5.1]{Fuj19} and \cite[Lemma 3.8]{Fuj17b}).

\begin{lemma}
If E is a dreamy divisor over X, then there is a test configuration $(\mathcal{X},\Delta_{\tc};\mathcal{L})$ whose central fiber is integral such that $\ord_E$
is proportional to the restriction of $\ord_{\mathcal{X}_0}$. Conversely, if $(\mathcal{X},\Delta_{\tc};\mathcal{L})$ is a test configuration whose central fiber is integral, then the restriction of $\ord_{\mathcal{X}_0}$ is a dreamy valuation over X.
\end{lemma}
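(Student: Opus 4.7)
The plan is to build a bridge between the bigraded ring appearing in the definition of dreamy divisors and the Rees-type construction that produces test configurations from filtrations, showing the central fiber is integral exactly when the valuation is divisorial with multiplicative associated graded.

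For the forward direction, I would fix a sufficiently divisible $r$ so that $-r(K_X+\Delta)$ is Cartier and set $R_i = H^0(X, -ir\sigma^*(K_X+\Delta))$ and $R = \bigoplus_i R_i$. The divisor $E$ induces the filtration
\[
\mathcal{F}^j R_i = \{ s \in R_i \,:\, \ord_E(s) \geq j \} = H^0(X, -ir\sigma^*(K_X+\Delta) - jE).
\]
Form the extended Rees algebra $\fR := \bigoplus_{i \geq 0} \bigoplus_{j \in \bZ} t^{-j}\mathcal{F}^j R_i \subset R[t,t^{-1}]$, where by convention $\mathcal{F}^j R_i = R_i$ for $j \leq 0$. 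Since the bigraded ring $\bigoplus_{i,j \geq 0} \mathcal{F}^j R_i$ is finitely generated by the dreamy hypothesis, so is $\fR$ over $\bC[t]$. I would then define $\mathcal{X} := \Proj_{\bA^1} \fR$, equipped with the natural $\bC^*$-action coming from the $t$-grading and the polarization $\mathcal{L}$ descending from the $i$-grading. A standard check (as in \cite{BHJ17}) shows $(\mathcal{X};\mathcal{L})$ is a test configuration of $(X;-r(K_X+\Delta))$ after normalizing.

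To see that $\mathcal{X}_0$ is integral, I would identify the central fiber with $\Proj$ of the associated graded ring $\mathrm{gr}_E R = \bigoplus_{i,j} \mathcal{F}^j R_i / \mathcal{F}^{j+1} R_i$. The crucial point is that this associated graded is an integral domain: since $\ord_E$ is a genuine valuation, for nonzero homogeneous classes $[f],[g]$ lifted to $f,g$ with $\ord_E(f) = j_1$, $\ord_E(g) = j_2$, we have $\ord_E(fg) = j_1 + j_2$, so $[fg] \neq 0$. Hence $\mathcal{X}_0$ is irreducible and reduced, i.e.\ integral, and by construction the restriction of $\ord_{\mathcal{X}_0}$ to $K(X)$ is proportional to $\ord_E$.

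For the converse, given a test configuration with integral $\mathcal{X}_0$, the divisor $\mathcal{X}_0 \subset \mathcal{X}$ defines a divisorial valuation on $K(\mathcal{X}) = K(X)(t)$, and its restriction $v$ to $K(X)$ is either trivial (which happens iff the test configuration is trivial, excluded here) or a divisorial valuation, hence $v = c \cdot \ord_E$ for some prime divisor $E$ over $X$ and $c \in \bQ_{>0}$. To verify $E$ is dreamy, I would identify $\bigoplus_{i,j \geq 0} H^0(X, -ir\sigma^*(K_X+\Delta) - jE)$ with a Veronese subring of the bigraded section ring of $\overline{\mathcal{L}}$ and $\overline{\mathcal{X}}_0$ on $\overline{\mathcal{X}}$; the latter is finitely generated because $\overline{\mathcal{L}}$ is semiample (or because the relevant Rees-type algebra equals the coordinate ring pushed down from $\mathcal{X}$, which is finitely generated by relative ampleness of $\mathcal{L}$).

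The main obstacle is the bookkeeping that matches the filtration-indexing used in the definition of dreamy divisors with the $\ord_{\mathcal{X}_0}$-indexing coming from the test configuration, together with verifying that the Proj construction really yields a normal test configuration (which may require taking normalization and checking the central fiber remains integral, equivalently that the integral closure of the extended Rees algebra does not split $\mathcal{X}_0$). These technicalities are handled in \cite{Fuj19} and \cite{Fuj17b} and I would simply appeal to those once the valuation-theoretic heart of the argument is in place.
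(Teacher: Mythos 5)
The paper gives no proof of this lemma beyond citing \cite[Theorem 5.1]{Fuj19} and \cite[Lemma 3.8]{Fuj17b}, and your sketch is precisely the Rees-algebra argument carried out in those references: the dreamy hypothesis makes the extended Rees algebra of the $\ord_E$-filtration finitely generated, integrality of the central fiber follows from the associated graded ring being a domain, and the converse identifies the bigraded section ring with a finitely generated algebra on the (compactified) test configuration. The one point you defer --- that normalization does not break the central fiber into several components --- is exactly the technicality those references address, so the proposal is correct and follows the same route as the paper's source.
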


\begin{remark}
If $(\mathcal{X},\Delta_{\tc};\mathcal{L})$ is a test configuration whose central fiber is integral, then $\mathcal{L}\sim_{\mathbb{Q}}-K_{\mathcal{X}/\mathbb{A}^1}-\Delta_{\tc}$.
\end{remark}

\subsection{Various invariants}

In this subsection, we recall the $\beta$-invariants and $\delta$-invariants of log Fano pairs.

\begin{definition}[\cite{Fuj19}]
Let $(X,\Delta)$ be a log Fano pair and $E$ a divisor over $X$. The $\beta$-invariant of $E$ (or $\ord_E$) is defined as:
$$\beta_{X,\Delta}(E):=A_{X,\Delta}(E)-\frac{1}{(-K_X-\Delta)^n} \int_0^\infty\vol(-K_X-\Delta-xE)\rd x.$$
Note that the above definition differs from Fujita's original definition by a multiple. We also write $$S_{X,\Delta}(E):=\frac{1}{(-K_X-\Delta)^n} \int_0^\infty\vol(-K_X-\Delta-xE)\rd x$$
and let $T_{X,\Delta}(E)$ be the pseudo-effective threshold of $-E$ with respect to $-K_X-\Delta$, i.e.
$$T_{X,\Delta}(E)=\sup\{x\in\bR^+\,|\,\vol(-K_X-\Delta-xE)>0\}.$$
Finally we let $j_{X,\Delta}(E)=T_{X,\Delta}(E)-S_{X,\Delta}(E)$.
\end{definition}

\begin{remark}\label{remark of relation}
We have the following relation between $S_{X,\Delta}(E)$ and $T_{X,\Delta}(E)$ (see e.g. \cite[Lemma 2.6]{BJ17}):
$$\frac{1}{n+1}T_{X,\Delta}(E)\leq S_{X,\Delta}(E)\leq\frac{n}{n+1}T_{X,\Delta}(E) .$$
It then follows that
$$\frac{1}{n}S_{X,\Delta}(E)\leq j_{X,\Delta}(E)\leq nS_{X,\Delta}(E) .$$
\end{remark}

$\beta$-invariant has a close relation to K-stability, as discovered in \cite{Fuj19} and \cite{Li17} (see also \cite{BX18} for part of the statement):

\begin{theorem} \label{Fujita}
Let $(X,\Delta)$ be a log Fano pair. The following are equivalent:
\begin{enumerate}
\item $(X,\Delta)$ is K-semistable $($resp. K-stable, uniformly K-stable$)$.
 
\item $\beta_{X,\Delta}(E)\geq 0$ $($resp. $>0$, $\ge \epsilon j_{X,\Delta}(E)$ for some fixed $\epsilon>0)$ for any divisorial valuation ${\rm{ord}}_E$ over $X$.

\item $\beta_{X,\Delta}(E)\geq 0$ $($resp. $>0$, $\ge \epsilon j_{X,\Delta}(E)$ for some fixed $\epsilon>0)$ for any dreamy divisorial valuation ${\rm{ord}}_E$ over $X$.

\item $\beta_{X,\Delta}(E)\geq 0$ $($resp. $>0$, $\ge \epsilon j_{X,\Delta}(E)$ for some fixed $\epsilon>0)$ for any special divisorial valuation ${\rm{ord}}_E$ over $X$.

% $\Leftrightarrow$ $\beta_{X,\Delta}(E)\geq0$ for any dreamy divisorial valuation ${\rm{ord}}_E$ over X. 

% $\Leftrightarrow$ $\beta_{X,\Delta}(E)\geq0$ for any special divisorial valuation ${\rm{ord}}_E$ over X.

% \item {\rm (\cite{BX18})} $(X,\Delta)$ is K-stable 

% $\Leftrightarrow$ $\beta_{X,\Delta}(E) > 0$ for any divisorial valuation ${\rm{ord}}_E$ over X. 

% $\Leftrightarrow$ $\beta_{X,\Delta}(E)>0$ for any dreamy divisorial valuation ${\rm{ord}}_E$ over X. 

% $\Leftrightarrow$ $\beta_{X,\Delta}(E)>0$ for any special divisorial valuation ${\rm{ord}}_E$ over X.

% \item $(X,\Delta)$ is uniformly K-stable 

% $\Leftrightarrow$ $\exists {\epsilon}$ with $ 0<\epsilon <1$ such that $\beta_{X,\Delta}(E)\geq \epsilon j_{X,\Delta}(E)$ for any divisorial valuation ${\rm{ord}}_E$ over X. 

% $\Leftrightarrow$ $\exists {\epsilon}$ with $ 0<\epsilon <1$ such that $\beta_{X,\Delta}(E)\geq \epsilon j_{X,\Delta}(E)$ for any dreamy divisorial valuation ${\rm{ord}}_E$ over X. 

% $\Leftrightarrow$ $\exists {\epsilon}$ with $ 0<\epsilon <1$ such that  $\beta_{X,\Delta}(E)\geq \epsilon j_{X,\Delta}(E)$ for any special divisorial valuation ${\rm{ord}}_E$ over X.
\end{enumerate}
\end{theorem}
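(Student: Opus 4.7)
The plan is to follow the Fujita--Li framework \cites{Fuj19, Li17} combined with the MMP reduction of \cites{LX14, BX18}, adapted so that all three flavors of stability (semistable, stable, uniform) are handled by a single argument. The implications $(2)\Rightarrow(3)\Rightarrow(4)$ are immediate, since weakly special divisors are dreamy and all of these are divisors over $X$; the substantive content is in $(1)\Rightarrow(2)$ (valuation to test configuration) and $(4)\Rightarrow(1)$ (test configuration to valuation via MMP).

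For $(1)\Rightarrow(2)$, given a divisorial valuation $\ord_E$ over $X$, the idea is to approximate $\ord_E$ by a sequence of normal test configurations. For each sufficiently divisible $m$, consider the $\bN$-filtration
\[
\cF^\lambda H^0(X,-mr(K_X+\Delta)) := \{s\,|\,\ord_E(s)\geq \lambda\},
\]
and apply the Rees algebra construction to produce a test configuration $(\cX_m,\Dtc^{(m)};\cL_m)$. A direct computation (cf.\ \cites{Fuj19, BHJ17}) shows that, after the appropriate normalization by $(-K_X-\Delta)^n$, one has $\Fut(\cX_m,\Dtc^{(m)};\cL_m)\to \beta_{X,\Delta}(E)$ and $J(\cX_m,\Dtc^{(m)};\cL_m)\to j_{X,\Delta}(E)$ as $m\to\infty$. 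The K-semistable, K-stable, and uniformly K-stable hypotheses then pass to the limit and yield the corresponding lower bounds on $\beta_{X,\Delta}(E)$ (respectively on the ratio $\beta_{X,\Delta}(E)/j_{X,\Delta}(E)$).

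For $(4)\Rightarrow(1)$, I would invoke the MMP reduction of \cites{LX14, BX18}. Starting from an arbitrary normal test configuration $(\cX,\Dtc;\cL)$, running a sequence of $K_{\cX}+\Dtc+\eta\cX_0$-MMP steps and finally a base change produces a weakly special (in fact, special) test configuration $(\cX',\Dtc';\cL')$ with integral central fiber, satisfying $\Fut(\cX',\Dtc';\cL')\leq \Fut(\cX,\Dtc;\cL)$ and, crucially for the uniform case, $J(\cX',\Dtc';\cL')\geq c\cdot J(\cX,\Dtc;\cL)$ for a uniform constant $c>0$ depending only on $(X,\Delta)$. The weakly special divisor $E$ associated to $(\cX',\Dtc';\cL')$ satisfies the hypothesis of (4), and unwinding the inequalities transfers the bound back to the original test configuration.

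The main obstacle is the uniform K-stability statement: the approximation and MMP steps must preserve not just positivity of $\Fut$ but the linear comparison between $\Fut$ and $J$. Both processes are inherently asymmetric in how they act on these invariants, so one has to verify that $J$ does not collapse to zero while $\Fut$ remains bounded away from zero. The pseudo-effective comparison $\tfrac{1}{n}S_{X,\Delta}(E)\leq j_{X,\Delta}(E)\leq nS_{X,\Delta}(E)$ recalled in Remark \ref{remark of relation} is what allows one to extract a uniform $\epsilon>0$ in the valuative formulation that matches the one in the test-configuration formulation, and verifying the matching constants through each step of the MMP reduction is the most delicate part of the argument.
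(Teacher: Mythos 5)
First, note that the paper does not prove this statement: it is quoted directly from the literature with citations to \cite{Fuj19}, \cite{Li17} and \cite{BX18}, so there is no in-paper argument to compare against. Your outline follows the standard Fujita--Li route from those references, and the reduction of the cycle to $(1)\Rightarrow(2)$ and $(4)\Rightarrow(1)$ (with $(2)\Rightarrow(3)\Rightarrow(4)$ being containments of classes of valuations) is the right skeleton. However, two of your key claims are not correct as stated, and they are precisely the points where the cited proofs do the real work.

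For $(1)\Rightarrow(2)$, the assertion that a ``direct computation'' gives $\Fut(\cX_m,\Delta_{\tc}^{(m)};\cL_m)\to\beta_{X,\Delta}(E)$ is the crux, not a routine verification. The generalized Futaki invariant contains the term $\ocL^{\,n}\cdot(K_{\ocX/\bP^1}+\oDtc)$, and for the Rees construction applied to the truncated filtration of $\ord_E$ the central fiber $\cX_{m,0}$ is in general non-reduced with many components whose discrepancies are not controlled by $A_{X,\Delta}(E)$; the intersection numbers $\ocL^{\,n+1}$ do converge to the volume integral, but the canonical-divisor term does not converge to $A_{X,\Delta}(E)$ in any straightforward way. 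The actual proofs either replace $\Fut$ by the Ding invariant (whose lct term is well behaved under this approximation) and separately establish that K-semistability implies Ding-semistability via an MMP argument, or bypass test configurations entirely using basis-type divisors and the $\delta$-invariant as in \cite{BJ17}. For $(4)\Rightarrow(1)$, the simultaneous inequalities $\Fut(\cX')\le\Fut(\cX)$ and $J(\cX')\ge c\,J(\cX)$ are not what the MMP reduction of \cite{LX14}, \cite{BHJ17} and \cite{Fuj19} provides: $J$ is not bounded below through the contraction and ample-model steps. What is actually proved, and what you should use, is the monotonicity of the combination $\Fut-\epsilon J$ at each step, with $\epsilon$ degrading by a controlled factor (e.g.\ the degree of the base change). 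That weaker statement still closes the argument, but as written your two-sided claim is a gap.
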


% \begin{theorem}[{\cite[Theorem 5.1]{Fuj19}}]
% Let $(X,\Delta)$ be a log Fano pair, and let $(\mathcal{X},\Delta_{\tc};\mathcal{L})$ be a test configuration whose central fiber is integral. Then
% $$\Fut(\mathcal{X},\Delta_{\tc};\mathcal{L})=\beta_{X,\Delta}(v_{\mathcal{X}_0}) $$
% and
% $$J(\mathcal{X},\Delta_{\tc};\mathcal{L})=j_{X,\Delta}(v_{\mathcal{X}_0}):=T_{X,\Delta}(v_{\mathcal{X}_0})-S_{X,\Delta}(v_{\mathcal{X}_0}) .$$
% Here $v_{\mathcal{X}_0}$ is the divisorial valuation over X induced by the restriction of $\ord_{\mathcal{X}_0}$.

% %, i.e. $v_{\mathcal{X}_0}=r(\ord_{\mathcal{X}_0})$,   and  $T_{X,\Delta}(v_{\mathcal{X}_0})$ is the pseudoeffective threshold of $v_{\mathcal{X}_0}$ with respect to $-K_X-\Delta$.
% \end{theorem}

The following $\delta$-invariant is introduced by \cite{FO18} to characterize K-stability.

\begin{definition}
Let $(X,\Delta)$ be a log Fano pair and $E$ a divisor over $X$. We set
$$ \delta_m(X,\Delta) :=  \sup \left\{a\in \mathbb{R}^+ | (X,\Delta+aD_m) \text{ is lc for any $m$-basis type divisor }D_m \right\} ,$$
and
$$\delta(X,\Delta):=\limsup_m \delta_m(X,\Delta) .$$
By \cite{BJ17}, the above limsup is in fact a limit and we have
$$\delta(X,\Delta)=\inf_E\frac{A_{X,\Delta}(E)}{S_{X,\Delta}(E)} ,$$
where the infimum runs over all divisorial valuations $E$ over $X$.
\end{definition}

Parallel to Theorem \ref{Fujita} we have (\cites{FO18,BJ17,Fuj19}):

\begin{theorem}\label{parallel criterion}
Let $(X,\Delta)$ be a log Fano pair. The following are equivalent:
\begin{enumerate}
\item $(X,\Delta)$ is K-semistable $($resp. K-stable, uniformly K-stable$)$.

\item $\frac{A_{X,\Delta}(E)}{S_{X,\Delta}(E)}\geq 1$ $($resp. $>1$, $>1+\epsilon$ for some fixed $\epsilon>0)$ for any divisorial valuation ${\rm{ord}}_E$ over $X$. 

\item $\frac{A_{X,\Delta}(E)}{S_{X,\Delta}(E)}\geq 1$ $($resp. $>1$, $>1+\epsilon$ for some fixed $\epsilon>0)$ for any dreamy divisorial valuation ${\rm{ord}}_E$ over $X$. 

\item $\frac{A_{X,\Delta}(E)}{S_{X,\Delta}(E)}\geq 1$ $($resp. $>1$, $>1+\epsilon$ for some fixed $\epsilon>0)$ for any special divisorial valuation ${\rm{ord}}_E$ over $X$. 

% $\Leftrightarrow$ $\frac{A_{X,\Delta}(E)}{S_{X,\Delta}(E)}\geq 1$ for any dreamy divisorial valuation ${\rm{ord}}_E$ over X. 

% $\Leftrightarrow$ $\frac{A_{X,\Delta}(E)}{S_{X,\Delta}(E)}\geq 1$ for any special divisorial valuation ${\rm{ord}}_E$ over X.

% \item X is K-stable 

% $\Leftrightarrow$ $\frac{A_{X,\Delta}(E)}{S_{X,\Delta}(E)}> 1$ for any divisorial valuation ${\rm{ord}}_E$ over X. 

% $\Leftrightarrow$ $\frac{A_{X,\Delta}(E)}{S_{X,\Delta}(E)}> 1$ for any dreamy divisorial valuation $\ord_E$ over X. 

% $\Leftrightarrow$ $\frac{A_{X,\Delta}(E)}{S_{X,\Delta}(E)}> 1$ for any special divisorial valuation ${\rm{ord}}_E$ over X.

% \item X is uniformly K-stable
 
% $\Leftrightarrow$ $\exists {\epsilon}$ with $ 0<\epsilon <1$ such that $\frac{A_{X,\Delta}(E)}{S_{X,\Delta}(E)}\geq 1+\epsilon$ for any divisorial valuation ${\rm{ord}}_E$ over X. 

% $\Leftrightarrow$ $\exists {\epsilon}$ with $ 0<\epsilon <1$ such that $\frac{A_{X,\Delta}(E)}{S_{X,\Delta}(E)}\geq 1+\epsilon$ for any dreamy divisorial valuation ${\rm{ord}}_E$ over X. 

% $\Leftrightarrow$ $\exists {\epsilon}$ with $ 0<\epsilon <1$ such that  $\frac{A_{X,\Delta}(E)}{S_{X,\Delta}(E)}\geq 1+\epsilon$ for any special divisorial valuation ${\rm{ord}}_E$ over X.
\end{enumerate}
\end{theorem}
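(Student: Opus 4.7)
The plan is to derive Theorem \ref{parallel criterion} from Theorem \ref{Fujita} by a direct dictionary between the $\beta$-invariant and the ratio $A/S$. The implications $(2)\Rightarrow(3)\Rightarrow(4)$ are automatic from inclusions of valuation classes: by the lemma preceding Theorem \ref{Fujita}, any special test configuration (being plt) has integral central fiber, hence every special divisor is dreamy, and of course every dreamy divisor is a divisor over $X$. So the content is $(4)\Rightarrow(1)$ in each of the three cases, and for this I will translate to the $\beta$-formulation and quote Theorem \ref{Fujita}.

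For K-semistability and K-stability, the identity
$$\beta_{X,\Delta}(E) \;=\; A_{X,\Delta}(E)-S_{X,\Delta}(E) \;=\; S_{X,\Delta}(E)\left(\frac{A_{X,\Delta}(E)}{S_{X,\Delta}(E)}-1\right)$$
together with the positivity $S_{X,\Delta}(E)>0$ for any nontrivial divisorial valuation makes the inequalities $\beta_{X,\Delta}(E)\ge 0$ and $A_{X,\Delta}(E)/S_{X,\Delta}(E)\ge 1$ equivalent (and similarly for the strict versions). Applying Theorem \ref{Fujita} on each of the classes (all / dreamy / special) then gives $(1)\Leftrightarrow(2)\Leftrightarrow(3)\Leftrightarrow(4)$ in these two cases.

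For uniform K-stability, I use the two-sided comparison
$$\tfrac{1}{n}\,S_{X,\Delta}(E) \;\le\; j_{X,\Delta}(E) \;\le\; n\,S_{X,\Delta}(E)$$
recorded in Remark \ref{remark of relation}. Combined with $\beta=A-S$ this shows that the condition ``$\beta_{X,\Delta}(E)\ge \epsilon\, j_{X,\Delta}(E)$ for some fixed $\epsilon>0$'' is equivalent to ``$A_{X,\Delta}(E)/S_{X,\Delta}(E)\ge 1+\epsilon'$ for some fixed $\epsilon'>0$'': from the first one gets $A-S\ge \epsilon j\ge (\epsilon/n)S$, hence $A/S\ge 1+\epsilon/n$; conversely, $A/S\ge 1+\epsilon'$ yields $\beta\ge \epsilon' S\ge (\epsilon'/n)j$. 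Invoking Theorem \ref{Fujita} on each of the three classes of valuations closes this case as well.

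The one genuinely nontrivial ingredient is the implication $(4)\Rightarrow(1)$, namely that testing on special divisors alone already detects (uniform) K-(semi)stability. We do not reprove this: it is absorbed into Theorem \ref{Fujita}, whose proof in \cite{FO18, BJ17, Fuj19} goes through the MMP-based degeneration of arbitrary test configurations to special ones. Thus the only real task in the present theorem is the bookkeeping described above, and the main expected obstacle—the $\beta$ vs. $A/S$ translation in the uniform regime, where one must keep track of constants—is handled cleanly by the comparison of $j$ and $S$ in Remark \ref{remark of relation}.
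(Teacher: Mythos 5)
Your proposal is correct. Note, however, that the paper does not prove Theorem \ref{parallel criterion} at all: it is stated as a known result imported from \cite{FO18}, \cite{BJ17} and \cite{Fuj19}, so there is no in-paper argument to compare against. What you have done is give a self-contained derivation of it from Theorem \ref{Fujita} (itself also quoted from the literature), and that derivation is sound: the pointwise identity $\beta_{X,\Delta}(E)=S_{X,\Delta}(E)\bigl(A_{X,\Delta}(E)/S_{X,\Delta}(E)-1\bigr)$ with $S_{X,\Delta}(E)>0$ handles the semistable and stable cases, and the two-sided bound $\tfrac1n S\le j\le nS$ from Remark \ref{remark of relation} correctly converts ``$\beta\ge\epsilon j$ uniformly'' into ``$A/S\ge 1+\epsilon'$ uniformly'' with explicit loss of at most a factor $n$ in the constant, which is exactly the point that needs care. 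Your reduction of the chain to the single nontrivial implication $(4)\Rightarrow(1)$, justified by the inclusions special $\subseteq$ dreamy $\subseteq$ all divisorial valuations (using that a special test configuration, being plt along $\Dtc+\cX_0$, has integral central fiber and hence induces a dreamy valuation by the lemma in Section \ref{section2}), is also the standard way these statements are organized in the cited sources. The only caveat is that the real mathematical content --- the validity of Theorem \ref{Fujita} itself, in particular its clause $(4)$ --- remains a black box in your write-up, but since the paper treats both theorems as citations this is an acceptable division of labor.
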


% \begin{proof}
% The first two statements are parallel to theorem \ref{Fujita}(1) and \ref{Fujita}(2). It suffices to show the last one. The following are equivalent:

%  $\exists {\epsilon}$ with $ 0<\epsilon <1$ such that  $\frac{A_{X,\Delta}(E)}{S_{X,\Delta}(E)}\geq 1+\epsilon$ for any special divisorial valuation ${\rm{ord}}_E$ over X.

% $\Leftrightarrow$ $\exists {\epsilon}$ with $ 0<\epsilon <1$ such that  $A_{X,\Delta}(E)-S_{X,\Delta}(E)\geq \epsilon S_{X,\Delta}(E)$ for any special divisorial valuation ${\rm{ord}}_E$ over X.

% $\Leftrightarrow$ $\exists {\epsilon}$ with $ 0<\epsilon <1$ such that  $A_{X,\Delta}(E)-S_{X,\Delta}(E)\geq \epsilon'\epsilon j_{X,\Delta}(E)$ for any special divisorial valuation ${\rm{ord}}_E$ over X.

% In the last statement, we have $A_{X,\Delta}(E)-S_{X,\Delta}(E)\geq \epsilon'\epsilon j_{X,\Delta}(E)$ since there is a $\epsilon'>0$ such that $S_{X,\Delta}(E)\geq \epsilon'j_{X,\Delta}(E)$ by remark \ref{remark of relation}.
% \end{proof}

% \begin{theorem}[\cites{FO18,BJ17}]
% \begin{enumerate}
% \item Notations as above. Then $$\delta(X,\Delta)=\inf_E\frac{A_{X,\Delta}(E)}{S_{X,\Delta}(E)} ,$$
% where E runs through all divisorial valuations over X.

% \item $(X,\Delta)$ is K-semistable if and only if $\delta(X,\Delta)\geq 1$, and $(X,\Delta)$ is uniformly K-stable if and only if $\delta(X,\Delta)> 1$.
% \end{enumerate}
% \end{theorem}

\section{Criteria for K-stability}\label{section3}

In this section, we will establish several criteria for uniform K-stability.

\begin{theorem}\label{uks via boundary}
Suppose $(X,\Delta)$ is a log Fano pair, then the following two are equivalent:
\begin{enumerate}
\item $(X,\Delta)$ is uniformly K-stable.

\item There exists some $\epsilon>0$ such that $(X,\Delta+\epsilon D)$ is K-semistable for any $D\in|-K_X-\Delta|_\mathbb{R}$.
\end{enumerate}
\end{theorem}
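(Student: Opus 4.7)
The plan is to translate both directions through the $\beta$-invariant criterion of Theorem \ref{Fujita}, which says that $(X,\Delta)$ is uniformly K-stable if and only if there exists $\delta>0$ with $\beta_{X,\Delta}(E)\geq \delta\cdot j_{X,\Delta}(E)$ for every divisor $E$ over $X$. The basic algebraic identity underlying both directions is the transformation rule
$$\beta_{X,\Delta+\epsilon D}(E) \;=\; \beta_{X,\Delta}(E)-\epsilon\bigl(\ord_E D-S_{X,\Delta}(E)\bigr),$$
valid for any $D\in|-K_X-\Delta|_\mathbb{R}$ and any divisor $E$ over $X$. It reflects the scaling $S_{X,\Delta+\epsilon D}(E)=(1-\epsilon)S_{X,\Delta}(E)$ under modification of the boundary by $\epsilon D$, together with $A_{X,\Delta+\epsilon D}(E)=A_{X,\Delta}(E)-\epsilon\ord_E D$. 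Equally crucial is the uniform bound $\ord_E D\leq T_{X,\Delta}(E)$ for effective $D\in|-K_X-\Delta|_\mathbb{R}$, which holds because on any model $\pi:Y\to X$ extracting $E$, the divisor $\pi^*D-\ord_E D\cdot E$ is effective.

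The direction $(1)\Rightarrow(2)$ is due to \cite{Fuj17}: starting from $\beta_{X,\Delta}(E)\geq \epsilon_0\cdot j_{X,\Delta}(E)$, I would choose any $0<\epsilon\leq\min(\epsilon_0,\alpha(X,\Delta))$; the alpha-invariant bound guarantees $(X,\Delta+\epsilon D)$ is klt log Fano for every $D$, and the identity above combined with $\ord_E D-S_{X,\Delta}(E)\leq j_{X,\Delta}(E)$ yields $\beta_{X,\Delta+\epsilon D}(E)\geq \beta_{X,\Delta}(E)-\epsilon\cdot j_{X,\Delta}(E)\geq 0$ for every $E$, establishing K-semistability of $(X,\Delta+\epsilon D)$ via Theorem \ref{Fujita}.

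For $(2)\Rightarrow(1)$, fix any divisor $E$ over $X$. For each $c<T_{X,\Delta}(E)$, I construct $D_c\in|-K_X-\Delta|_\mathbb{R}$ with $\ord_E D_c=c$ by choosing a generic effective $\mathbb{R}$-divisor in the big class $\pi^*(-K_X-\Delta)-cE$ on a model $\pi:Y\to X$ extracting $E$, and pushing forward to $X$. Applying hypothesis (2) to $D_c$ and then the criterion $\beta_{X,\Delta+\epsilon D_c}(E)\geq 0$ together with the identity gives
$$\beta_{X,\Delta}(E)\;\geq\;\epsilon\bigl(c-S_{X,\Delta}(E)\bigr).$$
Sending $c\nearrow T_{X,\Delta}(E)$ yields $\beta_{X,\Delta}(E)\geq \epsilon\cdot j_{X,\Delta}(E)$ for every $E$, and Theorem \ref{Fujita} then produces uniform K-stability.

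The main technical obstacle is ensuring that $(X,\Delta+\epsilon D_c)$ is actually klt so that hypothesis (2) may be invoked. Klt-ness at $E$ itself requires $\epsilon c<A_{X,\Delta}(E)$, which can be kept compatible with $c\nearrow T_{X,\Delta}(E)$ precisely when $\epsilon\cdot T_{X,\Delta}(E)\leq A_{X,\Delta}(E)$. Remarkably, this condition is forced by hypothesis (2) itself via a short case analysis: if it failed for some $E$, letting $c$ approach the smaller value $A_{X,\Delta}(E)/\epsilon$ in the displayed inequality would force $\beta_{X,\Delta}(E)\geq A_{X,\Delta}(E)-\epsilon\cdot S_{X,\Delta}(E)$, i.e.\ $\epsilon\cdot S_{X,\Delta}(E)\geq S_{X,\Delta}(E)$, contradicting $\epsilon<1$ since $S_{X,\Delta}(E)>0$. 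Klt-ness of $(X,\Delta+\epsilon D_c)$ away from the center of $E$ is handled by a Bertini-type generic choice of $D_c$ on the model $Y$, possibly after first shrinking $\epsilon$ to lie below $\alpha(X,\Delta)$; a verification that K-semistability in the hypothesis survives such shrinking runs along the same transformation identity.
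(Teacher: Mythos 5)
Your argument is correct and is essentially the proof in the paper: both directions rest on the identity $\beta_{X,\Delta+\epsilon D}(E)=A_{X,\Delta}(E)-\epsilon\,\ord_E(D)-(1-\epsilon)S_{X,\Delta}(E)$, the fact that $\sup_{D\in|-K_X-\Delta|_{\mathbb{R}}}\ord_E(D)=T_{X,\Delta}(E)$, and the valuative characterizations of Theorem \ref{Fujita}. The only differences are cosmetic: the paper chooses $\epsilon=\mu/(n+1)$ via the bound $T_{X,\Delta}(E)\le (n+1)S_{X,\Delta}(E)$ rather than your $\min(\epsilon_0,\alpha(X,\Delta))$, and your extra case analysis guaranteeing klt-ness is not needed, since hypothesis (2) already asserts that each $(X,\Delta+\epsilon D)$ is a K-semistable (hence klt log Fano) pair, while in the converse direction klt-ness follows from the computed inequality $A_{X,\Delta+\epsilon D}(E)\ge (1-\epsilon)S_{X,\Delta}(E)>0$.
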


\begin{proof}
For any divisorial valuation ${\rm{ord}}_E$ over $X$, 
$$\beta_{X,\Delta+\epsilon D}(E)=A_{X,\Delta}(E)-\epsilon\cdot{\rm{ord}}_E(D)-(1-\epsilon) S_{X,\Delta}(E).$$

Assume (2) holds, then $\beta_{X,\Delta+\epsilon D}(E)\ge 0$ for all $D\in |-K_X-\Delta|_\mathbb{R}$ and all divisors $E$ over $X$. Taking the supremum over $D$ we have
$$A_{X,\Delta}(E)-\epsilon T_{X,\Delta}(E)-(1-\epsilon) S_{X,\Delta}(E)\geq 0, $$
i.e.  
$$A_{X,\Delta}(E)-S_{X,\Delta}(E)\geq \epsilon \cdot(T_{X,\Delta}(E)-S_{X,\Delta}(E))=\epsilon\cdot j_{X,\Delta}(E),$$
which implies (1).

Conversely, suppose $(X,\Delta)$ is uniformly K-stable, then by Theorem \ref{Fujita}, there exists some $\mu$ with $0<\mu<1$, such that $\frac{A_{X,\Delta}(E)}{S_{X,\Delta}(E)}\geq 1+\mu$ for any divisorial valuation ${\rm{ord}}_E$ over $X$. By Remark \ref{remark of relation}, we can choose a $0<\epsilon<1$ such that 
$$(1+\mu)S_{X,\Delta}(E)\geq \epsilon\cdot T_{X,\Delta}(E)+(1-\epsilon) S_{X,\Delta}(E),$$
say $\epsilon=\frac{\mu}{n+1}$. Thus
\begin{align*}
\beta_{X,\Delta+\epsilon\cdot D}(E) & = A_{X,\Delta}(E)-\epsilon\cdot {\rm{ord}}_E(D)-(1-\epsilon) S_{X,\Delta}(E) \\
 & \geq  A_{X,\Delta}(E)-\epsilon\cdot T_{X,\Delta}(E)-(1-\epsilon) S_{X,\Delta}(E)\geq 0
\end{align*}
for any $D\in |-K_X-\Delta|_\mathbb{R}$. So, $(X,\Delta+\epsilon D)$ is K-semistable by Theorem \ref{Fujita}(1).
\end{proof}

Inspired by the above Theorem \ref{uks via boundary}, we can define a new invariant for a log Fano pair $(X,\Delta)$, the \textit{uniformity} of $(X,\Delta)$, which characterizes how uniformly K-stable $(X,\Delta)$ is.

\begin{definition}
Suppose $(X,\Delta)$ is a given K-semistable log Fano pair. The uniformity of $(X,\Delta)$ is defined as follows:
$$u(X,\Delta):=\sup\left\{a\in\mathbb{R}_{\geqslant0}| \mbox{$(X,\Delta+aD)$ is K-semistable} ,  \forall D\in|-K_X-\Delta|_\mathbb{R}\right\} .$$
\end{definition}

We can give a precise characterization for $u(X,\Delta)$.

\begin{proposition}
Let $(X,\Delta)$ be a K-semistable klt log Fano pair, then $$u(X,\Delta)=\inf_E\frac{\beta_{X,\Delta}(E)}{j_{X,\Delta}(E)},$$
where E runs through all divisors over X.
\end{proposition}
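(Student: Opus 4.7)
The plan is to redo the quantitative computation from the proof of Theorem \ref{uks via boundary}, but this time tracking the optimal constant rather than merely producing some positive one.

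First I would write out $\beta_{X,\Delta+aD}(E)$ explicitly for each divisor $E$ over $X$, each $a\in[0,1)$, and each $D\in|-K_X-\Delta|_{\bR}$. The log discrepancy obeys $A_{X,\Delta+aD}(E)=A_{X,\Delta}(E)-a\cdot\ord_E(D)$, and the identity $-K_X-\Delta-aD\sim_{\bR}(1-a)(-K_X-\Delta)$ lets me rescale the integral defining $S$ (via $x\mapsto (1-a)y$) to obtain $S_{X,\Delta+aD}(E)=(1-a)S_{X,\Delta}(E)$. Combining these gives
$$\beta_{X,\Delta+aD}(E)=A_{X,\Delta}(E)-a\cdot\ord_E(D)-(1-a)S_{X,\Delta}(E).$$

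Next I would apply Theorem \ref{Fujita}(1): $(X,\Delta+aD)$ is K-semistable iff this quantity is $\geq 0$ for every divisor $E$ over $X$. Requiring this simultaneously for every $D\in|-K_X-\Delta|_{\bR}$ and swapping the two universal quantifiers, the condition becomes, for every $E$,
$$a\cdot\sup_{D\in|-K_X-\Delta|_{\bR}}\ord_E(D)\leq A_{X,\Delta}(E)-(1-a)S_{X,\Delta}(E).$$
Using the standard equality $\sup_{D}\ord_E(D)=T_{X,\Delta}(E)$ this simplifies to $a\cdot j_{X,\Delta}(E)\leq\beta_{X,\Delta}(E)$ for every $E$, equivalently $a\leq\inf_E\beta_{X,\Delta}(E)/j_{X,\Delta}(E)$ (Remark \ref{remark of relation} ensures $j>0$ for every nontrivial $E$). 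Taking the supremum over such $a$ yields the desired formula.

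The only real subtlety is to verify that $(X,\Delta+aD)$ is actually klt, hence log Fano, so that Theorem \ref{Fujita} even applies. Fortunately this is automatic from the inequality $a\cdot j_{X,\Delta}(E)\leq\beta_{X,\Delta}(E)$: rearranging gives $aT_{X,\Delta}(E)\leq A_{X,\Delta}(E)-(1-a)S_{X,\Delta}(E)\leq A_{X,\Delta}(E)$ for every $E$, whence $a\leq\inf_E A_{X,\Delta}(E)/T_{X,\Delta}(E)$, which is precisely the global log canonical threshold (the $\alpha$-invariant) along the $\bR$-linear system $|-K_X-\Delta|_{\bR}$. So no extra argument is required; the main task is really just organizing the quantifier swap cleanly.
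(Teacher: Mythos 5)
Your proposal is correct and follows essentially the same route as the paper: compute $\beta_{X,\Delta+aD}(E)=A_{X,\Delta}(E)-a\,\ord_E(D)-(1-a)S_{X,\Delta}(E)$, invoke the valuative criterion (Theorem \ref{Fujita}), and take the supremum over $D$ to replace $\ord_E(D)$ by $T_{X,\Delta}(E)$, yielding $a\le\inf_E\beta_{X,\Delta}(E)/j_{X,\Delta}(E)$. Your extra verification that $(X,\Delta+aD)$ remains klt (so that the valuative criterion applies) is a point the paper's proof passes over silently, and is a worthwhile addition.
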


\begin{proof}
Suppose $a$ is a nonnegative real number such that $(X,\Delta+a D)$ is K-semistable for any $D\in|-K_X-\Delta|_\mathbb{R}$, then we have 
$$\beta_{X,\Delta+a D}(E)=A_{X,\Delta}(E)- {\rm ord}_E(aD)-(1-a)S_{X,\Delta}(E)\geq0, $$
$\forall D\in|-K_X-\Delta|_\mathbb{R}$ and $\forall E$ over $X$. This is equivalent to
$$A_{X,\Delta}(E)-S_{X,\Delta}(E)\geq a(T_{X,\Delta}(E)-S_{X,\Delta}(E))$$
for any $E$ over $X$, i.e. 
$$a\leq \inf_E\frac{\beta_{X,\Delta}(E)}{j_{X,\Delta}(E)}. $$
\end{proof}

By Theorem \ref{uks via boundary}, we have the following corollary:

\begin{corollary}
Suppose $(X,\Delta)$ is a K-semistable log Fano pair. Then 
\begin{enumerate}
\item $(X,\Delta)$ is uniformly K-stable if and only if $u(X,\Delta)>0$.
\item $\delta(X,\Delta)=1$ if and only if $u(X,\Delta)=0$.
\end{enumerate}
\end{corollary}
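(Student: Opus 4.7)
The corollary will follow almost immediately by combining the preceding proposition (which identifies $u(X,\Delta)$ with $\inf_E \beta_{X,\Delta}(E)/j_{X,\Delta}(E)$), Theorem \ref{uks via boundary}, and Remark \ref{remark of relation}. So the plan is not to prove anything substantial new but to package these three ingredients correctly.

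For part (1), the plan is as follows. If $(X,\Delta)$ is uniformly K-stable, Theorem \ref{uks via boundary} directly furnishes an $\epsilon>0$ such that $(X,\Delta+\epsilon D)$ is K-semistable for every $D\in|-K_X-\Delta|_\bR$; this $\epsilon$ lies in the defining set of $u(X,\Delta)$, giving $u(X,\Delta)\ge\epsilon>0$. Conversely, if $u(X,\Delta)>0$, the definition of supremum produces a positive $\epsilon\in(0,u(X,\Delta))$ lying in the defining set, i.e.\ $(X,\Delta+\epsilon D)$ is K-semistable for all $D$, and Theorem \ref{uks via boundary} again converts this into uniform K-stability. (I would remark briefly that we are using the fact that the set defining the supremum is an interval, which is clear from linearity: if $(X,\Delta+aD)$ is K-semistable for every $D$ then so is $(X,\Delta+a'D)$ for every $0\le a'\le a$, since $a'D = a((a'/a)D)$ and $(a'/a)D\in|-K_X-\Delta|_\bR$.)

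For part (2), I would use the proposition together with Remark \ref{remark of relation}. Assume first that $(X,\Delta)$ is K-semistable; then $\beta_{X,\Delta}(E)\ge 0$ for all $E$ and $\delta(X,\Delta)\ge 1$, while $j_{X,\Delta}(E)>0$ always (since $T_{X,\Delta}(E)>0$ for any divisor $E$ over $X$ by ampleness of $-K_X-\Delta$, and $j\ge T/(n+1)$ by the remark), so the ratio $\beta/j$ is well-defined and nonnegative. Writing $\beta = A - S$ and using $S/n \le j \le nS$ from the remark, we get for each divisor $E$
\begin{equation*}
\frac{1}{n}\Bigl(\tfrac{A_{X,\Delta}(E)}{S_{X,\Delta}(E)}-1\Bigr)\ \le\ \frac{\beta_{X,\Delta}(E)}{j_{X,\Delta}(E)}\ \le\ n\Bigl(\tfrac{A_{X,\Delta}(E)}{S_{X,\Delta}(E)}-1\Bigr).
\end{equation*}
Taking infimum over $E$ and applying the formula $\delta(X,\Delta)=\inf_E A_{X,\Delta}(E)/S_{X,\Delta}(E)$, this gives
\begin{equation*}
\frac{\delta(X,\Delta)-1}{n}\ \le\ u(X,\Delta)\ \le\ n\bigl(\delta(X,\Delta)-1\bigr),
\end{equation*}
from which $u(X,\Delta)=0 \iff \delta(X,\Delta)=1$ is immediate under the K-semistability assumption.

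There is no real obstacle here, but if I had to identify the one point to double-check it would be the claim that $j_{X,\Delta}(E)>0$ for every $E$ (so that $\beta/j$ makes sense as an element of $[0,\infty]$ and the infimum equals zero precisely when one can make the ratio arbitrarily small). This follows cleanly from $-K_X-\Delta$ being ample (hence $T_{X,\Delta}(E)>0$) combined with the lower bound $j\ge T/(n+1)$ of Remark \ref{remark of relation}, so the argument goes through without complication.
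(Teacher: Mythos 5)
Your proposal is correct and follows exactly the route the paper intends: part (1) is a direct application of Theorem \ref{uks via boundary} together with the definition of $u(X,\Delta)$ as a supremum, and part (2) is the sandwich inequality $\frac{1}{n}(\delta(X,\Delta)-1)\le u(X,\Delta)\le n(\delta(X,\Delta)-1)$ obtained from the proposition and Remark \ref{remark of relation}, which is precisely the remark the paper records immediately after the corollary. Nothing further is needed.
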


\begin{remark}
By Remark \ref{remark of relation} we have the following relation between $u(X,\Delta)$ and $\delta(X,\Delta)-1$:
$$\frac{1}{n}(\delta(X,\Delta)-1)\leq u(X,\Delta)\leq n(\delta(X,\Delta)-1) .$$
\end{remark}

Theorem \ref{Fujita}, Theorem \ref{parallel criterion} and Theorem \ref{uks via boundary} give three characterizations of uniform K-stability. We now give another criterion using only $\beta$-invariant.

\begin{theorem}\label{thm:uKs via beta}
Let $(X,\Delta)$ be a log Fano pair. The following three are equivalent:
\begin{enumerate}
\item $(X,\Delta)$ is uniformly K-stable. 

\item There exists $\epsilon>0$ such that $\beta_{X,\Delta}(E)\geq \epsilon$ for any divisor $E$ over X. 

\item There exists $\epsilon>0$ such that $\beta_{X,\Delta}(E)\geq \epsilon$ for any dreamy divisor $E$ over X. 
\end{enumerate}
\end{theorem}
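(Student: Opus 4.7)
The plan is to prove the three equivalences via the cycle $(1)\Rightarrow(2)\Rightarrow(3)\Rightarrow(1)$. For $(1)\Rightarrow(2)$, I would combine two ingredients. First, by the quantitative form of Theorem~\ref{parallel criterion}, uniform K-stability provides some $\epsilon'>0$ with $A_{X,\Delta}(E)\geq (1+\epsilon')\,S_{X,\Delta}(E)$ for every divisor $E$ over $X$. Second, since $(X,\Delta)$ is klt and projective, the global minimal log discrepancy $c:=\inf_E A_{X,\Delta}(E)$ is strictly positive. Combining these:
$$\beta_{X,\Delta}(E)=A-S\;\geq\;\frac{\epsilon'}{1+\epsilon'}A(E)\;\geq\;\frac{\epsilon'c}{1+\epsilon'}\;>\;0,$$
which is (2). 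The implication $(2)\Rightarrow(3)$ is immediate since dreamy divisors are in particular divisors over $X$.

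For the main direction $(3)\Rightarrow(1)$, by Theorem~\ref{Fujita}(3) it suffices to upgrade the pointwise bound $\beta\geq\epsilon$ to the scaling bound $\beta\geq\epsilon'j$ on dreamy divisors. I would argue by contradiction: if $(X,\Delta)$ fails to be uniformly K-stable, Theorem~\ref{Fujita}(3) supplies dreamy divisors $E_i$ with $\beta(E_i)/j(E_i)\to 0$; together with (3) this forces $j(E_i)\to\infty$, and Remark~\ref{remark of relation} combined with K-semistability (itself a consequence of (3)) yields $S(E_i), T(E_i), A(E_i)\to\infty$. The next step is to rescale, setting $v_i:=\ord_{E_i}/A(E_i)$, which gives divisorial valuations with $A(v_i)=1$, $S(v_i)\to 1$, and $\beta(v_i)\to 0$; a uniform Izumi-type bound $T(E)\leq CA(E)$ places $\{v_i\}$ in a compact subset of the space of valuations on $K(X)$.

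The main obstacle is converting a limit point $v_\infty$ of $\{v_i\}$ into a dreamy prime divisor actually violating (3). The limit is a $\delta$-minimizer with $A(v_\infty)\leq 1=S(v_\infty)$, which is a priori only quasi-monomial, so completing the contradiction requires approximating $v_\infty$ by dreamy prime divisors $F$ with $\beta_{X,\Delta}(F)\to \beta(v_\infty)=0$. A cleaner alternative is to bypass the limit argument via the boundary criterion of Theorem~\ref{uks via boundary}: the dreaminess condition depends only on the proportionality class of $-K_X-\Delta$, so dreamy divisors over $(X,\Delta)$ and over $(X,\Delta+\epsilon_0 D)$ coincide for every $D\in|-K_X-\Delta|_\mathbb{R}$ and $0<\epsilon_0<1$; the identity
$$\beta_{X,\Delta+\epsilon_0 D}(E)=\beta_{X,\Delta}(E)-\epsilon_0\bigl(\ord_E(D)-S_{X,\Delta}(E)\bigr)$$
then reduces matters to lower-bounding $\beta/j$ on dreamy divisors---the precise technical step where the hypothesis $\beta\geq\epsilon$ must be played off against the potential unboundedness of $j$.
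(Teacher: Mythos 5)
Your $(1)\Rightarrow(2)$ and $(2)\Rightarrow(3)$ are fine and essentially match the paper (the paper makes your positivity of the global minimal log discrepancy explicit by bounding $A_{X,\Delta}(E)\ge 1/r$ with $r$ a Cartier index of $K_X+\Delta$). The problem is the converse direction, where you have correctly located but not closed the gap. Your rescaling/compactness scheme cannot work as stated: the limit point $v_\infty$ only satisfies $A(v_\infty)=S(v_\infty)$, i.e.\ it shows $\delta(X,\Delta)\le 1$, and to contradict (3) you would need an actual \emph{dreamy prime divisor} $F$ with $\beta_{X,\Delta}(F)<\epsilon$; but divisorial valuations approximating $v_\infty$ have small $\beta/A$, not small $\beta$, precisely because their log discrepancies blow up. The ``cleaner alternative'' you sketch via Theorem \ref{uks via boundary} circles back to the same unresolved estimate, as you acknowledge.

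The paper's argument avoids the scaling problem entirely and rests on two inputs absent from your proposal. For $(2)\Rightarrow(1)$ it uses $m$-basis type divisors: by \cite[Corollary 3.6]{BJ17} there are constants $c_m\to 1$ with $c_m\cdot\ord_E(D_m)\le S_{X,\Delta}(E)$ for every divisor $E$ and every $m$-basis type divisor $D_m$, so the hypothesis $\beta\ge\epsilon$ says exactly that every pair $(X,\Delta+c_mD_m)$ is $\epsilon$-lc. The decisive tool is then Birkar's effective lower bound on log canonical thresholds (\cite[Theorem 1.6]{Birkar-2}), which gives a uniform $t>0$ with $\lct(X,\Delta+c_mD_m;D_m)\ge t$; hence $\delta_m(X,\Delta)\ge c_m+t$ and $\delta(X,\Delta)\ge 1+t>1$, which is uniform K-stability by Theorem \ref{parallel criterion}. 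This is exactly the mechanism that ``plays $\beta\ge\epsilon$ off against the unboundedness of $A$'' that your sketch calls for but does not supply. The reduction $(3)\Rightarrow(2)$ is likewise handled by a separate lemma (Lemma \ref{lem:dreamy}): for a K-semistable pair, any divisor with $\beta_{X,\Delta}(E)<1$ has log discrepancy $<1$ with respect to some lc pair $(X,\Delta+\lambda_mD_m)$, hence is extracted on a Fano type model by \cite[Corollary 1.4.3]{BCHM10} and is therefore dreamy; so (3) automatically controls every divisor with $\beta<1$, and (2) follows with $\min(\epsilon,1)$. Without these two ingredients the proposal does not constitute a proof.
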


\begin{proof}
$(1)\Rightarrow (2)$: If $(X,\Delta)$ is uniformly K-stable, then there exists some $\delta>1$ such that $A_{X,\Delta}(E)\ge \delta\cdot S_{X,\Delta}(E)$ for all divisor $E$ over $X$. Since $(X,\Delta)$ is log Fano, we have $A_{X,\Delta}(E)\ge \frac{1}{r}$ where $r$ is an integer such that $r(K_X+\Delta)$ is Cartier. Thus
\[
\beta_{X,\Delta}(E)=A_{X,\Delta}(E)-S_{X,\Delta}(E)\ge (1-\delta^{-1})A_{X,\Delta}(E)\ge \frac{1-\delta^{-1}}{r}
\]
for any divisor $E$ over $X$ and we may simply take $\epsilon=\frac{1-\delta^{-1}}{r}>0$.

$(2)\Rightarrow (1)$: Suppose that $\beta_{X,\Delta}(E)\geq \epsilon>0$ for all divisor over $X$. By \cite[Corollary 3.6]{BJ17}, there exists a sequence $c_m$ ($m=1,2,\cdots$) of numbers depending only on $(X,\Delta)$ such that $\lim_{m\to \infty} c_m=1$ and $c_m\cdot \ord_E(D_m)\le S_{X,\Delta}(E)$ for any $m\in \bN$, any divisor $E$ over $X$ and all $m$-basis type divisor $D_m\sim_\bQ -(K_X+\Delta)$. It follows that
\[
A_{X,\Delta+c_m D_m}(E)=A_{X,\Delta}(E)-c_m\cdot \ord_E(D_m)\ge A_{X,\Delta}(E)-S_{X,\Delta}(E)=\beta_{X,\Delta}(E)\ge \epsilon
\]
for all $m$, $E$ and $D_m$ as above. In other words, the pair $(X,\Delta+c_m D_m)$ is $\epsilon$-lc. By \cite[Theorem 1.6]{Birkar-2} (applied to the pair $(X,B=\Delta+c_m D_m)$, $M=D_m$ and the very ample divisor $A=-r(K_X+\Delta)$ for some sufficiently large and divisible $r$), there exists some $t>0$ depending only on $(X,\Delta)$ such that $\lct(X,B;D_m)\ge t$ for all $m$ and $D_m$. Hence $(X,\Delta+(c_m+t)D_m)$ is lc for all $m\in \bN$ and all $m$-basis type divisor $D_m$, which implies $\delta_m(X,\Delta)\ge c_m+t$. Letting $m\to \infty$ we see that $\delta(X,\Delta)\ge 1+t>1$ and therefore $(X,\Delta)$ is uniformly K-stable.

$(3)\Leftrightarrow (2)$: One direction is obvious. For the other direction, note that by Theorem \ref{Fujita}, (3) implies that $(X,\Delta)$ is K-semistable, hence it suffices to show that if $(X,\Delta)$ is a K-semistable log Fano pair, then any divisor $E$ over $X$ for which $\beta_{X,\Delta}(E)<1$ is dreamy. This is proved in Lemma \ref{lem:dreamy}.
\end{proof}
% Conversely, if there exists $\epsilon$ with $ 0<\epsilon <1$ such that $\beta(E)\geq \epsilon$ for any divisorial valuation ${\rm{ord}}_E$ over X, then there exists a sequence of positive numbers $c_m$ which only depend on $X$ such that $c_m$ tends to one and $(X,c_mD_m)$ is $\epsilon$-lc for any $m$-basis type divisor. Let's construct $c_m$ as follows:
% $$c_m=\inf_E \frac{S(E)}{S_m(E)},$$ 
% where $S_m(E):=\sup_{D_m}{\rm{ord}}_E(D_m)$, and $D_m$ runs through all $m$-basis type divisors. It's clear $c_m$ tends to one. Then we have
% $$A_{X,c_mD_m}(E)=A(E)-c_m {\rm{ord}}_E(D_m)\geq A(E)-c_m S_m(E)\geq A(E)-S(E)\geq \epsilon$$ for any $D_m$.
% By following lemma \ref{Birkar}, there is a positive number $\eta>0$ such that $(X,(1+\eta)c_mD_m)$ is still klt. So, $$\delta(X)\geq \lim_m (1+\eta)c_m=1+\eta,$$ which means $X$ is uniformly K-stable. To complete the proof, it suffices to show that if there is a divisor $E$ over $X$ such that $\beta(E)<\epsilon$ for some $0<\epsilon<1$, then $E$ is dreamy. This is proved in lemma \ref{dreamy}.

% \begin{lemma}{\rm (C.Birkar)}\label{Birkar}
% Let $d$, $r$ be natural numbers and $\epsilon$ a positive real number. Then there is a positive real number t depending only on d, r and $\epsilon$ satisfying the following. Assume
% (1) $(X,B)$ is a projective $\epsilon$-lc pair of dimension d,
% (2) A is a very ample divisor on X with $A^d \leq r$, 
% (3) $A-B$ is ample. Then
% $${\rm{lct}}(X,B;|A|_\mathbb{R})\geq t .$$
% \end{lemma}

\begin{lemma} \label{lem:dreamy}
Let $(X,\Delta)$ be a K-semistable log Fano pair and $E$ a divisor over $X$. Suppose that $\beta_{X,\Delta}(E)<1$. Then $E$ is dreamy.
\end{lemma}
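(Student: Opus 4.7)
The plan is to construct a normal test configuration of $(X,\Delta)$ with integral central fiber whose induced divisorial valuation is proportional to $\ord_E$; by the characterization of dreamy divisors recalled in Section \ref{section2}, this suffices to conclude that $E$ is dreamy.

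The standard route to such a test configuration goes through a log canonical place of a $\bQ$-complement: given an effective $\bQ$-divisor $\Gamma \sim_\bQ -(K_X+\Delta)$ such that $(X,\Delta+\Gamma)$ is log canonical with $A_{X,\Delta+\Gamma}(E)=0$, one extracts $E$ by a plt blowup $\sigma:Y\to X$ and runs the Li--Xu style relative MMP degenerating $(Y,\sigma_*^{-1}\Delta+E)$ to the normal cone of $E$; the resulting test configuration has integral central fiber, and its associated divisorial valuation on $K(X)$ is proportional to $\ord_E$. The task therefore reduces to producing such a $\Gamma$.

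By K-semistability we have $A_{X,\Delta}(E)\geq S_{X,\Delta}(E)$, and together with the hypothesis $\beta_{X,\Delta}(E)<1$ this yields $A_{X,\Delta}(E)<1+S_{X,\Delta}(E)$. Invoking the inequality $S_{X,\Delta}(E)\leq \tfrac{n}{n+1}T_{X,\Delta}(E)$ from Remark \ref{remark of relation}, we obtain
$$A_{X,\Delta}(E)<1+\tfrac{n}{n+1}T_{X,\Delta}(E).$$
When $T_{X,\Delta}(E)\geq n+1$, the right-hand side is at most $T_{X,\Delta}(E)$, so $A_{X,\Delta}(E)\leq T_{X,\Delta}(E)$. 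By the definition of $T$ as the pseudo-effective threshold, we may choose $\Gamma\sim_\bQ -(K_X+\Delta)$ with $\ord_E(\Gamma)=A_{X,\Delta}(E)$, and a Kawamata-style tie-breaking perturbation upgrades this to a globally lc $\bQ$-complement with $E$ as an lc place. When instead $T_{X,\Delta}(E)<n+1$, the same displayed inequality forces $A_{X,\Delta}(E)<n+1$, so $E$ has uniformly bounded log discrepancy; in this regime we appeal to Birkar's boundedness of complements to produce a bounded-index $\bQ$-complement of $(X,\Delta)$ having $E$ as an lc place.

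The main obstacle I anticipate is precisely the second case $T_{X,\Delta}(E)<n+1$: the naive pseudo-effective threshold argument fails to directly produce $\Gamma$, and we must extract an lc complement with $E$ as an lc place from the a priori bound $A_{X,\Delta}(E)<n+1$ via the much deeper boundedness of complements, then still perform the MMP construction to conclude dreaminess.
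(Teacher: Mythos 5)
Your reduction proves too much, and the step that would carry the whole argument is missing. Making $E$ an lc place of a $\bQ$-complement would show that $E$ is \emph{weakly special}, which is strictly stronger than dreamy; the paper only establishes this (Theorem \ref{weakly special}, Theorem \ref{lc complement}) under the much stronger hypothesis $\beta_{X,\Delta}(E)<\epsilon_0$, where $\epsilon_0$ comes from the ACC of log canonical thresholds, and it is not expected to hold for every $E$ with $\beta_{X,\Delta}(E)<1$ (e.g.\ every prime divisor $E\subset X$ has $\beta_{X,\Delta}(E)=A_{X,\Delta}(E)-S_{X,\Delta}(E)<1$, even on a uniformly K-stable pair). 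Concretely, both of your cases have gaps. In the case $T_{X,\Delta}(E)\ge n+1$, the inequality $A_{X,\Delta}(E)\le T_{X,\Delta}(E)$ only yields an effective $\Gamma\sim_\bQ -(K_X+\Delta)$ with $\ord_E(\Gamma)=A_{X,\Delta}(E)$; it gives no control whatsoever on $\lct(X,\Delta;\Gamma)$, so $(X,\Delta+\Gamma)$ need not be lc. Kawamata tie-breaking cannot repair this: it shrinks the non-klt locus of a pair that is \emph{already} lc, it does not create log canonicity. In the case $T_{X,\Delta}(E)<n+1$, boundedness of complements produces \emph{some} $N$-complement of $(X,\Delta)$, but nothing forces $E$ to be an lc place of it; a bound $A_{X,\Delta}(E)<n+1$ alone does not produce a complement with $E$ as an lc place (the paper's Theorem \ref{lc complement} achieves this only after first finding a klt pair $(X,\Delta+D)$ with $A_{X,\Delta+D}(E)<\epsilon_0$ and running a delicate MMP before invoking Birkar's theorem).

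The intended argument is much softer and does not pass through complements at all. Using $m$-basis type divisors $D_m$ with $\ord_E(D_m)\to S_{X,\Delta}(E)$ and $\lambda_m=\min\{\delta_m(X,\Delta),1\}\to 1$ (here K-semistability enters), one gets lc pairs $(X,\Delta+\lambda_m D_m)$ with
$$A_{X,\Delta+\lambda_m D_m}(E)\longrightarrow \beta_{X,\Delta}(E)<1,$$
so for $m\gg0$ the log discrepancy of $E$ with respect to such a pair is below $1$. Then \cite[Corollary 1.4.3]{BCHM10} extracts $E$ as a divisor on a birational model $Y\to X$ of Fano type; since $Y$ is a Mori dream space, the bigraded ring defining dreaminess is finitely generated. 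No lc place, no test configuration with integral central fiber, and no boundedness of complements are needed. If you want to salvage your route, you must either supply a genuine construction of a complement with $E$ as an lc place (which is the hard content of Theorems \ref{weakly special} and \ref{lc complement}, available only for $\beta_{X,\Delta}(E)<\epsilon_0$) or weaken your intermediate goal from ``lc place of a complement'' to ``log discrepancy $<1$ with respect to some lc pair $(X,\Delta+\Gamma)$,'' which is exactly what the basis-type-divisor argument provides.
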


\begin{proof}
By \cite[Lemma 3.5 and Corollary 3.6]{BJ17}, there exists $m$-basis type divisors $D_m\sim_\bQ -(K_X+\Delta)$ ($m\in\bN$) such that $\ord_E(D_m)\to S_{X,\Delta}(E)$ ($m\to \infty$). Let $\lambda_m=\min\{\delta_m(X,\Delta),1\}$. Since $(X,\Delta)$ is K-semistable, we have $\lim_{m\to \infty} \lambda_m =1$ and $(X,\Delta+\lambda_m D_m)$ is lc for all $m\in\bN$. Then as
$$A_{X,\Delta+\lambda_m D_m}(E)=A_{X,\Delta}(E) - \lambda_m \ord_E(D_m) \to \beta_{X,\Delta}(E)<1\,\,(m\to \infty),$$
we see that $A_{X,\Delta+\lambda_m D_m}(E)<1$ for $m\gg 0$. By \cite[Corollary 1.4.3]{BCHM10}, one can extract $E$ as a prime divisor on a Fano type variety and in particular $E$ is dreamy.
\end{proof}

% If $\delta(X)>1$, we choose sufficiently large $m$ and corresponding $m$-basis type divisor $D_m$ such that $A(E)-S_m(E)=A(E)-{\rm ord}_E(D_m)<1$, i.e. $A_{X,D_m}(E)<1$. Since the pair  $(X,D_m)$ is klt, one can extract $E$ to be a prime divisor on a Fano type variety by \cite{BCHM10}.

% From now on we assume $\delta(X)=1$, then one can choose a sufficiently large $m$ such that $A(E)-S_m(E)<\epsilon$ and $A(E)-\delta_mS_m(E)<\epsilon$. There exists a $m$-basis type divisor $D_m$ such that $S_m(E)={\rm{ord}}_E(D_m)$. We may assume $E$ is not a divisor on $X$, otherwise it's dreamy automatically. Then consider pair $(X,D_m)$ if $\delta_m\geq 1$, and consider pair $(X,\delta_mD_m)$ if $\delta_m<1$. 

% If $\delta_m\geq 1$, take a dlt modification of the log canonical pair $(X,D_m)$, say $f: Y\rightarrow (X,D_m)$, such that $E$ is a divisor on $Y$. It suffices to show $Y$ is of Fano type. We have
% $$K_Y+f_*^{-1}(D_m)+(1-A_{X,D_m}(E))E+\sum a_i F_i=f^*(K_X+D_m) ,$$
% and
% $$K_Y+\sum b_i F_i=f^*K_X ,$$
% where $a_i\geq 1$, $b_i<1$, while $E$ and $F_i$ are all exceptional divisors. We can choose a number $0\ll\eta<1$ such that $(X,\eta D_m)$ is klt, thereby $(Y,\eta (1-A_{X,D_m}(E))E)+\sum(\eta a_i+(1-\eta)b_i)F_i)$ is a klt weak Fano pair. So $E$ is a dreamy divisor over $X$.

% If $\delta_m<1$, we consider log canonical pair $(X,D_m':=\delta_mD_m)$. Replace $D_m$ by $D_m'$ and do the same as previous case.

In general, there are many dreamy divisors over a log Fano pair. We now show that those with small $\beta$-invariants are weakly special. In particular, combining with Theorem \ref{thm:uKs via beta}, this completes the proof of Theorem \ref{3}.

\begin{theorem}\label{weakly special}
Let $(X,\Delta)$ be a K-semistable log Fano pair. Then there exists
some $0 < \epsilon_0\ll 1$ such that
any dreamy divisor E over X with $\beta_{X,\Delta}(E) < \epsilon_0$  induces a weakly special test configuration of $(X, \Delta)$ with integral central fiber.
\end{theorem}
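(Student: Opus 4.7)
The plan is as follows. Let $E$ be a dreamy divisor with $\beta_{X,\Delta}(E) < \epsilon_0$, and let $(\mathcal{X},\Dtc;\cL)$ be the test configuration with integral central fiber it induces via the lemma preceding the theorem. By the remark we already have $\cL \sim_\bQ -K_{\cX/\bA^1} - \Dtc$, so the test configuration is weakly special precisely when $(\mathcal{X},\Dtc+\cX_0)$ is log canonical, which by inversion of adjunction (noting that $\cX_0$ is an integral reduced Cartier divisor on the normal variety $\cX$, hence normal) is equivalent to $(\cX_0,\Dtc|_{\cX_0})$ being log canonical. The strategy is to prove this last statement by running a basis-type argument on the central fiber, in the spirit of Theorem \ref{thm:uKs via beta}.

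Concretely, for sufficiently divisible $m$, pick a basis of $H^0(X,-mr(K_X+\Delta))$ that diagonalizes the filtration induced by $\ord_E$; the associated $m$-basis type divisor $D_m\sim_\bQ -(K_X+\Delta)$ then satisfies $\ord_E(D_m)\to S_{X,\Delta}(E)$ by \cite[Corollary 3.6]{BJ17}. Each such section is a $\bC^*$-eigenvector in $H^0(\cX,mr\cL)$, so its $\bC^*$-orbit closure on $\cX$ gives a $\bC^*$-invariant effective $\bQ$-divisor $\overline{D}_m$ whose restriction to the central fiber, $D_{m,0}$, is an $m$-basis type divisor of $(\cX_0,\Dtc|_{\cX_0};\cL_0)$. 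Since $(X,\Delta)$ is K-semistable, $\lambda_m:=\min\{\delta_m(X,\Delta),1\}\to 1$ and $(X,\Delta+\lambda_m D_m)$ is log canonical for every $m$. It therefore suffices to show that $(\cX_0,\Dtc|_{\cX_0}+\lambda_m D_{m,0})$ is log canonical for $m\gg 0$, since then $(\cX_0,\Dtc|_{\cX_0})$ is a fortiori log canonical.

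The crucial step is transferring this log canonicity from $X$ to the central fiber. Given a prime divisor $F$ over $\cX_0$, lift it to a prime divisor $\tF$ over $\cX$ and compute $A_{(\cX,\Dtc+\lambda_m \overline{D}_m)}(\tF)$ through the birational map $\cX\dashrightarrow X\times\bA^1$: writing $\ord_{\tF}|_{K(X)}=c_F\cdot w_F$ for a divisorial valuation $w_F$ over $X$ and $\ord_{\tF}(t)=d_F$, this log discrepancy equals $c_F\cdot(A_{X,\Delta}(w_F)-\lambda_m \ord_{w_F}(D_m))+d_F$ plus a correction coming from the specific birational model. The log canonicity of $(X,\Delta+\lambda_m D_m)$ makes the main factor non-negative for every $w_F$; the small $\beta$ hypothesis controls the contribution from the generic valuation $\ord_{\cX_0}$ itself (whose restriction is proportional to $\ord_E$), forcing it to approach $\beta_{X,\Delta}(E)<\epsilon_0$ up to normalization as $m\to\infty$. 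Combining this bound with Birkar's effective lower bound on lc thresholds (\cite[Theorem 1.6]{Birkar-2}), used exactly as in the proof of $(2)\Rightarrow(1)$ of Theorem \ref{thm:uKs via beta}, produces a uniform $\epsilon_0>0$ for which the above inequalities yield $(\cX_0,\Dtc|_{\cX_0}+\lambda_m D_{m,0})$ log canonical.

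The main obstacle is this final bootstrapping. Non-negativity of the log discrepancy of the single generic valuation $\ord_{\cX_0}$ in $(\cX,\Dtc+\cX_0+\lambda_m\overline{D}_m)$ does not by itself imply log canonicity of the entire central fiber, since deeper valuations centered on proper subsets of $\cX_0$ could still violate the lc bound. Birkar's uniform positivity of lc thresholds on $\epsilon$-lc pairs, applied to $(X,\Delta+\lambda_m D_m)$ exactly as in Theorem \ref{thm:uKs via beta}, is what converts the small $\beta$ hypothesis into global log canonicity of $(\cX_0,\Dtc|_{\cX_0})$ and pins down the threshold $\epsilon_0$.
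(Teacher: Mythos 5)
Your proposal has a genuine gap, and you in fact name it yourself in the last paragraph without resolving it. The reduction to log canonicity of $(\cX_0,\Dtc|_{\cX_0})$ already needs care ($\cX_0$ is an integral Cartier divisor on a normal variety but need not be normal, so inversion of adjunction must pass through the normalization and the different), but the real problem is the ``crucial step'': log canonicity does \emph{not} specialize. Lower semicontinuity of log canonical thresholds goes the wrong way --- the central fiber of a degeneration can be strictly worse than the general fiber --- so knowing that $(X,\Delta+\lambda_m D_m)$ is lc for all $m$-basis type divisors gives no direct control over valuations centered inside $\cX_0$. Your formula for $A_{(\cX,\Dtc+\lambda_m\overline{D}_m)}(\tF)$ hides all of the difficulty in the unspecified ``correction coming from the specific birational model,'' and the appeal to \cite[Theorem 1.6]{Birkar-2} does not close the gap: that theorem bounds the lc threshold of $D_m$ with respect to an $\epsilon$-lc pair on $X$ itself, exactly as used in Theorem \ref{thm:uKs via beta}, and says nothing about discrepancies of valuations centered on the degenerate fiber $\cX_0$. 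Moreover, nowhere in your argument is $\epsilon_0$ actually pinned down by a mechanism that could work uniformly for all such $E$.

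The paper's proof runs quite differently and supplies precisely the two ingredients you are missing. First, $\epsilon_0$ is chosen once and for all via the ACC of log canonical thresholds \cite{HMX14}: for boundary coefficients in a fixed finite set $\fR$, if $(Y,B+(1-\epsilon_0)G)$ is lc with $G$ reduced then $(Y,B+G)$ is lc. Second, from $\beta_{X,\Delta}(E)<\epsilon_0$ one produces (as in Lemma \ref{lem:dreamy}) a $\bQ$-complement $D\in|-K_X-\Delta|_\bQ$ with $(X,\Delta+D)$ klt and $A_{X,\Delta+D}(E)<\epsilon_0$, i.e.\ $E$ is an ``almost lc place'' of a klt Calabi--Yau pair. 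One then extracts the $(1,1)$-quasi-monomial divisor $\cE$ over $X\times\bA^1$, contracts the strict transform of $X\times\{0\}$ by an MMP, and observes that every step is crepant for the pullback of $K_{X_{\bA^1}}+\Delta_{\bA^1}+D_{\bA^1}+X_0\sim_{\bQ}0$; hence on the resulting test configuration $(\cX,\Dtc+c\cX_0)$ is klt with $c>1-\epsilon_0$, and the ACC choice of $\epsilon_0$ upgrades this to $(\cX,\Dtc+\cX_0)$ being lc. It is this crepant Calabi--Yau bookkeeping through the MMP --- not a basis-type estimate on the central fiber --- that transfers singularity control to $\cX_0$. To repair your argument you would essentially have to reconstruct this: produce the complement $D$, and track it through the degeneration.
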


\begin{proof}
Let $\mathfrak{R}\subset [0, 1]$ be a finite set of rational numbers containing 1 and all finite sums of the coefficients of $\Delta$. Choose $\epsilon_0\in \bQ \bigcap (0, 1)$ such that a pair $(Y,B + G)$ (where $G$ is a reduced divisor and $\dim Y \leq \dim X + 1)$ is lc as long as $(Y, B + (1 -\epsilon_0)G)$ is lc and the coefficients of $B$ belongs to $\mathfrak{R}$. Such $\epsilon_0$ exists by the ACC of log canonical threshold \cite{HMX14}. Suppose $E$ is a divisor over $X$ with $\beta_{X,\Delta}(E)<\epsilon_0$, then similar to the proof of Lemma \ref{lem:dreamy} we can find a $D\in |-K_X-\Delta|_\bQ$ such that $(X,\Delta+D)$ is klt and $A_{X,\Delta+D}(E)<\epsilon_0$. By \cite[Corollary 1.4.3]{BCHM10}, one can extract $E$ on a birational model of $X$, say $\mu: Y\rightarrow X$ and 
$$K_Y+\widetilde{D}+\widetilde{\Delta}+cE=\mu^*(K_X+\Delta+D),$$ where $\widetilde{D}$ and $\widetilde{\Delta}$ are strict transformation of $D$ and $\Delta$ respectively and $1-\epsilon_0<c<1$. Note that $Y$ is of Fano type.
Consider the pair $(X_{\bA^1}, \Delta_{\bA^1}+D_{\bA^1}+X_0)$ (where $X_{\bA^1}=X\times \bA^1$, etc. and $X_0=X\times \{0\}$) which is a plt pair. Then there is an induced morphism $\mu_{\bA^1}: Y_{\bA^1}\rightarrow X_{\bA^1}$. Let $v$ be a quasi-monomial valuation over $X_{\bA^1}$ with weight $(1,1)$ along the divisors $X_0$ and $E_{\bA^1}$. It's clear that $v$ is a divisorial valuation over $X_{\bA^1}$ whose center is contained in $X_0$. Denote by $\cE$ the corresponding divisor over $X_{\bA^1}$, then $A_{X_{\bA^1},\Delta_{\bA^1}+D_{\bA^1}+X_0}(\cE)=A_{X,\Delta+D}(E)<\epsilon_0<1$, hence by \cite[Corollary 1.4.3]{BCHM10} we can extract $\cE$ on a projective birational model $\pi:\cY\to X_{\bA^1}$ of $X_{\bA^1}$. We have 
$$K_{\cY}+\pi_*^{-1}\Delta_{\bA^1}+\pi_*^{-1}D_{\bA^1}+\widetilde{X}_0+c\cE=\pi^*(K_{X_{\bA^1}}+\Delta_{\bA^1}+D_{\bA^1}+X_0),$$
where $\widetilde{X}_0$ is the strict transformation of $X_0$, $c>1-\epsilon_0$ and $\cY$ is of Fano type over $\bA^1$. Run the $\widetilde{X}_0$-MMP/$\bA^1$ on $\cY$, we get a minimal model $\cY\dashrightarrow \cY'$, and $\widetilde{X}_0$ is contracted by the negativity lemma (see e.g \cite[Lemma 3.39]{KM98}). Let $\Delta_{\bA^1}'$, $D_{\bA^1}'$ and $\cE'$ be the pushforward of $\pi_*^{-1}\Delta_{\bA^1}$, $\pi_*^{-1}D_{\bA^1}$ and $\cE$ on $\cY'$ respectively, then we know $\cY' \rightarrow \bA^1$ has an integral central fiber and the restriction of $\ord_{\cE'}$ is exactly $\ord_E$.
Let $(\cX,\Delta_{\tc};\cL)$ be the test configuration induced by the dreamy divisor $E$, then
$\cX$ is the ample model of $\cY'$ over $\bA^1$ with respect to $-(K_{\cY'}+\Delta_{\bA^1}')$. As $(\cY,\pi_*^{-1}\Delta_{\bA^1}+\pi_*^{-1}D_{\bA^1}+c\cE)$ is a klt Calabi-Yau pair (pullback of $(X_{\bA^1},\Delta_{\bA^1}+D_{\bA^1})$), we know that $(\cY',\Delta_{\bA^1}'+D_{\bA^1}'+c\cE')$ is also klt and the same holds for its strict transform on $\cX$. It follows that $(\cX,\Delta_{\tc}+c\cX_0)$ is a klt pair. As $c>1-\epsilon_0$, we see that $(\cX, \Delta_{\tc}+\cX_0)$ is lc by our choice of $\epsilon_0$.
\end{proof}

\begin{remark}
The above theorem says the following two statements are equivalent:
\begin{enumerate}
\item $(X,\Delta)$ is uniformly K-stable.
\item There is a $\epsilon>0$ such that $\beta_{X,\Delta}(E)\ge \epsilon$ for any weakly special divisor $E$ over $X$.
\end{enumerate}
Compared with Theorems \ref{Fujita} and \ref{parallel criterion}, one would expect that for uniform K-stability it's sufficient to check $\beta(E)\ge \epsilon$ for all special divisors $E$ over $X$, although this doesn't seem to follow from our current proof.
\end{remark}

It's expected that uniformly K-stable and K-stable are the same for any given log Fano pair. One direction is clear.  Assume $(X,\Delta)$ is K-stable, to confirm uniform K-stability, it suffices to show that there is a $\epsilon >0$ such that $\beta_{X,\Delta}(E)>\epsilon$ for any weakly special divisor $E$ over $X$. Let $\epsilon_0$ be as in the proof of Theorem \ref{weakly special}. Our next result (inspired by the recent work \cite{Xu19}) shows that it suffices to consider those $E$ that are bounded in some sense (note that a more general version that applies to all weakly special divisor is independently proved in \cite[Theorem A.2]{BLX19} using a somewhat different method):

\begin{theorem}\label{lc complement}
Let $(X,\Delta)$ be a K-semistable log Fano pair. If $E$ is a divisor over X with $\beta_{X,\Delta}(E)<\epsilon_0$, then we can find a $G\in \frac{1}{N}|-N(K_X+\Delta)|$ such that E is a lc place of $(X,\Delta+G)$. Here N is a positive integer number which only depends on $(X,\Delta)$.
\end{theorem}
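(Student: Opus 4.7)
The plan is to refine the proof of Theorem \ref{weakly special} so that the complement produced on $X$ has index uniformly bounded by a constant $N = N(X, \Delta)$; the new ingredient is Birkar's boundedness of complements \cite{Birkar-1}. As in Theorem \ref{weakly special}, I would first use the hypothesis $\beta_{X,\Delta}(E) < \epsilon_0$ together with the argument of Lemma \ref{lem:dreamy} to produce $D \in |-K_X-\Delta|_\bQ$ with $(X, \Delta + D)$ klt and $A_{X, \Delta + D}(E) < \epsilon_0$, and then apply \cite[Corollary 1.4.3]{BCHM10} to extract $E$ on a birational model $\pi: Y \to X$ whose exceptional locus is exactly the prime divisor $E$. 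Writing
\begin{equation*}
K_Y + \Delta_Y + D_Y + cE \;=\; \pi^{*}(K_X+\Delta+D)\;\sim_\bQ\; 0,\qquad c\in (1-\epsilon_0, 1),
\end{equation*}
the variety $Y$ is of Fano type because $D$ is ample, and the crepant Calabi--Yau pair $(Y, \Delta_Y + D_Y + cE)$ is klt.

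Next I would run a $(-(K_Y + \Delta_Y + E))$-MMP on $Y$ (i.e.\ contract $(K_Y+\Delta_Y+E)$-positive extremal rays), reaching a birational contraction $\phi: Y \dashrightarrow Y'$ after which $-(K_{Y'}+\Delta_{Y'}+E')$ is nef (primes denoting strict transforms). This MMP terminates because $Y$ is of Fano type, and it is crepant for the $\bQ$-trivial pair $(Y, \Delta_Y + D_Y + cE)$, so the pair $(Y', \Delta_{Y'} + (1-\epsilon_0)E')$, obtained from the crepant Calabi--Yau pair by dropping $D_{Y'}$ and lowering the coefficient of $E'$ from $c$ to $1-\epsilon_0 < c$, is klt. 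The defining property of $\epsilon_0$ (from the ACC of log canonical thresholds \cite{HMX14}) then promotes this to the assertion that $(Y', \Delta_{Y'} + E')$ is lc. The MMP cannot terminate at a Mori fiber space: if $Y' \to Z$ were such, with general fiber $F$, then $(K_{Y'}+\Delta_{Y'}+E')|_F$ would be ample while $(K_{Y'}+\Delta_{Y'}+(1-\epsilon_0)E')|_F$ would be anti-nef, contradicting the ACC of log canonical thresholds via the cone construction (cf.\ \cite[Theorem 1.5]{HMX14}).

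Finally, since $Y'$ is of Fano type, $(Y', \Delta_{Y'}+E')$ is lc with coefficients in the finite set $\fR$ depending only on $\Delta$, and $-(K_{Y'}+\Delta_{Y'}+E')$ is nef, Birkar's boundedness of complements \cite{Birkar-1} supplies an integer $N$ depending only on $\dim X$ and $\fR$, together with an effective divisor $G' \in |-N(K_{Y'}+\Delta_{Y'}+E')|$ such that $(Y', \Delta_{Y'} + E' + \tfrac{1}{N}G')$ is lc. Transferring $G'$ back to $Y$ via a common resolution (using that $K_{Y'}+\Delta_{Y'}+E'+\tfrac{1}{N}G'\sim_\bQ 0$, so the MMP is crepant for this pair as well) yields $G_Y \geq 0$ with $G_Y \sim -N(K_Y+\Delta_Y+E)$ and $(Y, \Delta_Y + E + \tfrac{1}{N}G_Y)$ lc; in particular $G_Y$ has no $E$-component. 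Setting $G := \pi_* G_Y$, we obtain $G \in |-N(K_X+\Delta)|$ (since $E$ is $\pi$-exceptional) and $E$ is a log canonical place of $(X, \Delta + \tfrac{1}{N}G)$, as desired. I expect the main obstacle to be the MMP step: verifying that one can reach a model with $-(K_{Y'}+\Delta_{Y'}+E')$ nef and $(Y', \Delta_{Y'}+E')$ lc, where the ACC of log canonical thresholds is essential both to rule out a Mori fiber space output and to upgrade klt of $(Y', \Delta_{Y'}+(1-\epsilon_0)E')$ to lc of $(Y', \Delta_{Y'}+E')$, thereby making Birkar's complement theorem applicable.
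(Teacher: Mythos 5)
Your proposal is correct and follows essentially the same route as the paper's proof: produce $D$ with $A_{X,\Delta+D}(E)<\epsilon_0$, extract $E$ on a Fano type model, run the $-(K_Y+\Delta_Y+E)$-MMP, rule out the Mori fiber space outcome via the ACC of log canonical thresholds, and apply Birkar's boundedness of complements before pushing the complement down to $X$. The only difference is that you spell out a couple of steps the paper leaves implicit (that $(Y',\Delta_{Y'}+E')$ is lc in the minimal model case, and the crepant transfer of $G'$ back to $Y$ and then to $X$), which is harmless.
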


\begin{proof}
As in the proof of Theorem \ref{weakly special}, we can find a $D\in |-K_X-\Delta|_{\bQ}$ such that $(X,\Delta+D)$ is klt and $A_{X,\Delta+D}(E)< \epsilon_0$. In addition, we can extract $E$ to be a divisor on a projective birational model of $X$, say $\mu: Y\rightarrow X$ and 
$$K_Y+\widetilde{\Delta}+\widetilde{D}+cE=\mu^*(K_X+\Delta+D) ,$$
where $\widetilde{\Delta}$ and $\widetilde{D}$ are the strict transformations and $1-\epsilon_0< c< 1$. Note that $Y$ is of Fano type, then we can run MMP for $-(K_Y+\widetilde{\Delta}+E)$. Suppose we get a Mori fiber space $Y\dashrightarrow Y'\rightarrow T$ and write $\widetilde{\Delta}'$ and $E'$ for the pushforward of $\widetilde{\Delta}$ and $E$ on $Y'$, then we know $(K_Y'+\widetilde{\Delta}'+E')|_F$ is ample where $F$ is the general fiber of $Y'\rightarrow T$. As $(Y,\widetilde{\Delta}+\widetilde{D}+cE)$ is a klt Calabi-Yau pair, so is $(Y',\widetilde{\Delta}'+\widetilde{D}'+cE')$. It follows that $(K_Y'+\widetilde{\Delta}'+cE')|_F$ is anti-nef and $(Y',\widetilde{\Delta}'+cE')$ is klt, thus $(Y',\widetilde{\Delta}'+E')$ is lc by the choice of $\epsilon_0$. But this contradicts \cite[Theorem 1.5]{HMX14}. So the MMP above produces a minimal model $Y\rightarrow Y'$ and $-(K_{Y'}+\widetilde{\Delta}'+E')$ is semiample.

Now by the boundedness of
complement \cite[Theorem 1.7]{Birkar-1}, there exists some integer $N > 0$ depending only
on the dimension and the set $\fR$ such that if $(Y', \widetilde{\Delta}'+E')$ is an lc pair of dimension $n$ with
coefficients in $\fR$, $Y'$ is of Fano type and $-(K_{Y'} + \widetilde{\Delta}'+E')$ is nef, then there exists some
effective divisor $G' \in \frac{1}{N}| -N(K_{Y'} +\widetilde{\Delta}'+E')|$ such that $(Y,\widetilde{\Delta}'+E'+ G')$ is lc.  It follows that $E$ is a lc place of the lc pair $(X,\Delta+G)$ where $G\in \frac{1}{N}|-N(K_X+\Delta)|$ is the pushforward of $G'$ to $X$.
\end{proof}

It is therefore very natural to ask the following question:

\begin{question}
Given a set $S$ of lc log Calabi-Yau pairs $(X,\Delta+D)$ such that $(X,\Delta)$ is log Fano. Let $S'$ be the set of lc log Calabi-Yau pairs that can be realized as special degenerations of pairs in $S$. Assume that $S$ is bounded. Is $S'$ bounded?
\end{question}

In particular, a positive answer to this question will lead to a proof that K-stability is equivalent to uniform K-stability (since the Futaki invariants have a bounded denominator in a bounded family). We don't know any proof or counterexample to the above question.

\begin{remark}
Theorem \ref{lc complement} also gives an approximation for $\delta(X,\Delta)=1$ using lc places of bounded lc complements, i.e. if $\delta(X,\Delta)=1$, then $\delta(X,\Delta)=\inf_{E} \frac{A_{X,\Delta}(E)}{S_{X,\Delta}(E)}$, where $E$ is a lc place of $(X,\Delta+G)$ for some lc $N$-complement $G$ of $(X,\Delta)$. See \cite[Corollary 3.6]{BLX19} for a more general statement when $\delta(X,\Delta)\leq 1$.
\end{remark}

\section{Twisted setting}\label{section4}

In this section, we will define K-stability in the twisted setting. To make it simple, we leave out the boundary as it doesn't play essential roles. $X$ always denotes a $\bQ$-Fano variety with $\delta(X)\leq 1$. We first recall the definition of twisted K-stability \cites{Der16, BLZ19}.

\begin{definition}
Let $(\mathcal{X},\mathcal{L})$ be a given normal test configuration of $X$, $0<\mu\leq 1$, then $\mu$-twisted generalized Futaki invariant is defined to be
$${\rm Fut}_{1-\mu}(\mathcal{X},\mathcal{L}):=\sup_{D\in |-K_X|_\mathbb{Q}}{\rm Fut}(\mathcal{X},(1-\mu)\mathcal{D};\mu\mathcal{L})$$
where $\mathcal{D}$ is closure of $D\times (\bA^1\setminus 0)$ in $\cX$.
\end{definition}

\begin{definition}
\begin{enumerate}

\item We say X is $\mu$-twisted K-semistable if ${\rm Fut}_{1-\mu}(\mathcal{X},\mathcal{L})\geq 0$ for every normal test configuration $(\mathcal{X},\mathcal{L})$.

\item We say X is $\mu$-twisted K-stable if ${\rm Fut}_{1-\mu}(\mathcal{X},\mathcal{L})> 0$ for every non-trivial normal test configuration $(\mathcal{X},\mathcal{L})$.

\item We say X is $\mu$-twisted uniformly K-stable if there exists a positive real number $\epsilon>0$ such that ${\rm Fut}_{1-\mu}(\mathcal{X},\mathcal{L})\geq \epsilon J(\mathcal{X},\mathcal{L})$ for every normal test configuration $(\mathcal{X},\mathcal{L})$.

\end{enumerate}
\end{definition}

In the above definition,  one should check all normal test configurations to test twisted K-stability. However, by a special test configuration theory in twisted setting that has been established in \cite[Theorem 1.6]{BLZ19} which is parallel to \cite{LX14}, we have following theorem:

\begin{theorem}
To test $\mu$-twisted K-semistability $($resp. K-stability and uniform K-stability$)$, it suffices to check all special test configurations.
\end{theorem}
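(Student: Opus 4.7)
The K-semistability and K-stability parts are precisely the content of \cite[Theorem 1.6]{BLZ19}, which gives a twisted analog of the Li--Xu MMP reduction \cite{LX14}: every normal test configuration can be connected to a special one through a sequence of three MMP steps (log canonical modification of the central fiber, MMP with scaling to reach a weakly special model, and special degeneration of the central fiber), and along this chain $\mathrm{Fut}_{1-\mu}$ is non-increasing. Hence if $\mathrm{Fut}_{1-\mu} \geq 0$ (resp. $> 0$) on all special test configurations, the same holds on all normal ones.

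The uniform K-stability case reduces to showing that $J$ is also well-behaved under this same chain. But $J(\mathcal{X}, \mathcal{L})$ depends only on $(\mathcal{X}, \mathcal{L})$ and not on any twisting divisor, so the classical analysis of \cite{BHJ17, Fuj19} applies verbatim and shows that $J$ is non-increasing under the three MMP steps. Combining this with the monotonicity of $\mathrm{Fut}_{1-\mu}$, the inequality
\[
\mathrm{Fut}_{1-\mu}(\mathcal{X}', \mathcal{L}') \geq \epsilon \, J(\mathcal{X}', \mathcal{L}')
\]
for every special $(\mathcal{X}', \mathcal{L}')$ propagates to every normal test configuration, yielding $\mu$-twisted uniform K-stability.

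The main obstacle I anticipate is the supremum over $D \in |-K_X|_\mathbb{Q}$ in the definition of $\mathrm{Fut}_{1-\mu}$: the MMP construction depends on the pair $(\mathcal{X}, (1-\mu)\mathcal{D})$ and therefore a priori on the choice of $D$, so one cannot hope that a single degeneration process handles all $D$ simultaneously. The resolution is to replace the sup over $D$ with its equivalent valuation-theoretic (filtration) formulation used in \cite{BLZ19}, namely a log canonical slope of the filtration induced by $(\mathcal{X}, \mathcal{L})$ on the section ring of $-K_X$; this quantity depends only on $(\mathcal{X}, \mathcal{L})$ and is manifestly compatible with the MMP steps. Once this intrinsic reformulation is in place, the Li--Xu monotonicity argument applies uniformly to both $\mathrm{Fut}_{1-\mu}$ and $J$, and the proof concludes.
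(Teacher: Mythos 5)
The paper does not actually prove this statement: it is quoted verbatim as a consequence of [BLZ19, Theorem 1.6], which already covers all three notions ($\mu$-twisted K-semistability, K-stability, and uniform K-stability). For the semistable and stable cases your appeal to that theorem is therefore exactly what the paper does, and your observation about the supremum over $D$ --- that one must pass to the intrinsic, filtration-theoretic (log canonical slope) formulation of $\mathrm{Fut}_{1-\mu}$ before running the MMP, since the MMP a priori depends on the chosen $\mathcal{D}$ --- correctly identifies how [BLZ19] makes the reduction independent of the twisting divisor.

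The gap is in your argument for the uniform case. You assert that $\mathrm{Fut}_{1-\mu}$ and $J$ are each non-increasing along the three MMP steps and that combining these monotonicities propagates the inequality $\mathrm{Fut}_{1-\mu}\ge \epsilon J$ from special to arbitrary normal test configurations. But the two monotonicities point the wrong way for this: from $\mathrm{Fut}_{1-\mu}(\mathcal{X},\mathcal{L})\ge \mathrm{Fut}_{1-\mu}(\mathcal{X}^{s},\mathcal{L}^{s})\ge \epsilon\, J(\mathcal{X}^{s},\mathcal{L}^{s})$ together with $J(\mathcal{X}^{s},\mathcal{L}^{s})\le J(\mathcal{X},\mathcal{L})$ one cannot conclude $\mathrm{Fut}_{1-\mu}(\mathcal{X},\mathcal{L})\ge \epsilon' J(\mathcal{X},\mathcal{L})$; what is needed is a \emph{lower} bound $J(\mathcal{X}^{s},\mathcal{L}^{s})\ge c\, J(\mathcal{X},\mathcal{L})$, i.e. that $J$ does not \emph{decrease} too much along the process, which is the opposite of what you claim and is not what the classical analysis gives. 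The mechanism actually used in [LX14], [Fuj19], [BHJ17] and carried to the twisted setting in [BLZ19] is that for each fixed $\epsilon\in[0,1)$ the \emph{combination} $\mathrm{Fut}_{1-\mu}-\epsilon J$ (or its Ding-type analogue) is non-increasing along each step, up to the multiplicative factor $d$ introduced by the normalized base change; neither $J$ nor $\mathrm{Fut}_{1-\mu}$ is controlled separately in a way that lets you split the inequality. So the uniform case should either be quoted from [BLZ19, Theorem 1.6] together with the other two, or re-proved via the monotonicity of the combined functional rather than of $J$ alone.
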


While $X$ may not be K-semistable, it can still be K-stable in the twisted sense \cite{BLZ19}. The following result is a refinement of the twisted valuative criterion established in \cite[Theorem 1.5]{BLZ19}.

\begin{theorem}\label{maximal twist}
Let X be a $\bQ$-Fano variety with $\delta(X)\le 1$, then X is $\mu$-twisted uniformly K-stable for $0<\mu<\delta(X)$, and X is $\mu$-twisted K-semistable but not $\mu$-twisted uniformly K-stable for $\mu=\delta(X)$.
\end{theorem}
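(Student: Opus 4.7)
The plan is to translate both twisted stability notions into valuative language and then read off the conclusions directly from the identity $\delta(X) = \inf_E A_X(E)/S_X(E)$. The first step is to compute, for a special test configuration $(\cX,\cL)$ corresponding to a special divisorial valuation $E$, both the twisted Futaki invariant and the $J$-invariant. For any $D \in |-K_X|_\bQ$ one has $-K_X - (1-\mu)D \sim_\bQ \mu(-K_X)$, which by a direct volume scaling gives $S_{X,(1-\mu)D}(E) = \mu S_X(E)$. Applying the Fujita-Li formula ($\Fut = \beta$ on special configurations) to the log Fano pair $(X,(1-\mu)D)$ polarized by $\mu\cL$ yields
\[
\Fut(\cX,(1-\mu)\cD;\mu\cL) = c\cdot \bigl(A_X(E) - (1-\mu)\ord_E(D) - \mu S_X(E)\bigr)
\]
for a positive normalization constant $c$ independent of $D$. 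Since $-K_X$ is ample, a general $D \in \tfrac{1}{m}|-mK_X| \subset |-K_X|_\bQ$ (for $m \gg 0$) avoids the center $c_X(E)$ and hence satisfies $\ord_E(D)=0$, so taking the supremum gives $\Fut_{1-\mu}(\cX,\cL) = c\cdot(A_X(E) - \mu S_X(E))$; a parallel direct computation yields $J(\cX,\cL) = c'\cdot j_X(E)$ for a positive constant $c'$. Combined with the twisted special test configuration reduction quoted just above the theorem (together with the twisted valuative criterion of \cite{BLZ19}), this reduces $\mu$-twisted K-semistability to $A_X(E) - \mu S_X(E) \ge 0$ on all divisorial $E$, and $\mu$-twisted uniform K-stability to the existence of $\epsilon>0$ with $A_X(E) - \mu S_X(E) \ge \epsilon\cdot j_X(E)$ on all divisorial $E$.

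Granted this translation, the case $0 < \mu < \delta(X)$ is immediate: for every divisorial $E$, $A_X(E) \ge \delta(X) S_X(E)$ by the definition of $\delta$, hence using $j_X(E) \le n S_X(E)$ from Remark \ref{remark of relation},
\[
A_X(E) - \mu S_X(E) \ge (\delta(X) - \mu)\, S_X(E) \ge \frac{\delta(X) - \mu}{n}\, j_X(E),
\]
uniformly in $E$. This delivers $\mu$-twisted uniform K-stability with explicit constant $(\delta(X) - \mu)/n$.

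For $\mu = \delta(X)$, K-semistability is again immediate from $A_X(E) \ge \delta(X) S_X(E)$. To rule out uniform K-stability I argue by contradiction: suppose some $\epsilon > 0$ satisfies $A_X(E) - \delta(X) S_X(E) \ge \epsilon\cdot j_X(E)$ for every divisorial $E$. Invoking the reverse inequality $j_X(E) \ge S_X(E)/n$ (again Remark \ref{remark of relation}) forces $A_X(E)/S_X(E) \ge \delta(X) + \epsilon/n$ uniformly over all divisorial $E$, which flatly contradicts $\delta(X) = \inf_E A_X(E)/S_X(E)$.

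The main obstacle is the valuative translation in the first step, and specifically the passage from \emph{special} divisorial valuations (which the reduction theorem directly controls) to \emph{all} divisorial valuations, which is what the clean contradiction in the final paragraph requires. This is exactly the place where one must rely on the full strength of the twisted valuative criterion of \cite{BLZ19}; an alternative route, if only the special version is available, is to invoke the special degeneration machinery of \cite{BLX19} to replace a $\delta$-minimizing sequence of arbitrary divisorial valuations by a special one, recovering the same contradiction.
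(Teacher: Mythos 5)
Your valuative translation and both halves of the argument are sound, but the route differs from the paper's in the first half and your final paragraph correctly puts its finger on the one real issue in the second half. For $0<\mu<\delta(X)$, the paper does not argue valuatively at all: it invokes \cite{BL18b}*{Theorem C} to produce a \emph{single} $D\in|-K_X|_\bQ$ with $(X,(1-\mu)D)$ uniformly K-stable, so that $\Fut_{1-\mu}(\cX,\cL)\ge \Fut(\cX,(1-\mu)\cD;\mu\cL)\ge\epsilon J(\cX,\cL)$ for \emph{every} normal test configuration, with no reduction to special ones needed. Your version instead chooses, for each special test configuration, a $D$ adapted to it (avoiding the center of the induced valuation) and then runs the inequality $A-\mu S\ge(\delta(X)-\mu)S\ge\frac{\delta(X)-\mu}{n}j$; this is more self-contained (no appeal to \cite{BL18b}) but leans on the special test configuration reduction of \cite{BLZ19} quoted before the theorem, which the paper's first half avoids. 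The trade-off is roughly: the paper outsources the uniformity to \cite{BL18b}, you outsource the reduction to \cite{BLZ19}; your explicit constant $(\delta(X)-\mu)/n$ is a small bonus.

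On the $\mu=\delta(X)$ case: as you note, uniform twisted K-stability only hands you $A(E)-\delta(X)S(E)\ge\epsilon\, j(E)$ for \emph{special} divisorial valuations, so contradicting $\delta(X)=\inf_E A(E)/S(E)$ over \emph{all} divisorial valuations is not yet legitimate. This is a genuine gap as the contradiction is literally written, but your proposed repair is exactly what the paper does: it cites \cite{BLZ19}*{Theorem 4.3} to produce a sequence of special test configurations $(\cX_i,\cL_i)$ with $A(v_{\cX_{i,0}})/S(v_{\cX_{i,0}})\to\delta(X)$, i.e.\ the infimum defining $\delta(X)$ is already computed along special valuations, and then runs your inequality $j\ge S/n$ along that sequence. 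So you should state the contradiction against that special minimizing sequence rather than against the infimum over all divisorial valuations; with that adjustment the argument closes. One further cosmetic point: your constants $c$ and $c'$ in the Futaki and $J$ computations are in fact the same normalization (the multiplicity of the restricted valuation $\ord_{\cX_0}|_{K(X)}$ over $\ord_E$), and it is worth saying so, since if they could degenerate independently over the family of special test configurations the uniformity in $\epsilon$ would be in danger.
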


\begin{proof}
For $\mu<\delta(X)$, by \cite[Theorem C]{BL18b}, there is a $D\in |-K_X|_\mathbb{Q}$ such that $(X,(1-\mu)D)$
is uniformly K-stable. Thus there is a positive real number $0<\epsilon<1$ such that 
$${\rm Fut}(\mathcal{X},(1-\mu)\mathcal{D};\mathcal{L})\geq \epsilon  J(\mathcal{X},\mathcal{L})$$
for any normal test configuration, so one has
$${\rm Fut}_{1-\mu}(\mathcal{X},\mathcal{L})\geq \epsilon  J(\mathcal{X},\mathcal{L})$$

For $\mu=\delta(X)$, we can choose a sequence of special test configurations $(\mathcal{X}_i,\mathcal{L}_i)$ such that $\lim_i\frac{A(v_{\mathcal{X}_{i,0}})}{S(v_{\mathcal{X}_{i,0}})}=\delta(X)$ \cite[Theorem 4.3]{BLZ19}. We aim to prove that $X$ is not $\delta$-twisted uniformly K-stable where $\delta=\delta(X)$. If not, there is a positive real number $0<\epsilon<1$ such that 
$${\rm Fut}_{1-\delta}(\mathcal{X}_i,\mathcal{L}_i)\geq \epsilon J(\mathcal{X}_i,\mathcal{L}_i).$$
One can choose a general $D\in |-K_X|_\mathbb{Q}$ such that
$${\rm Fut}_{1-\delta}(\mathcal{X}_i,\mathcal{L}_i) ={\rm Fut}(\mathcal{X}_i,(1-\delta)\mathcal{D};\mathcal{L}_i)$$
for any $i$, where $D$ doesn't contain any center of $v_{\mathcal{X}_{i,0}}$ \cite[Theorem 3.7]{BLZ19}.
Thus one obtain
$${\rm Fut}_{1-\delta}(\mathcal{X}_i,\mathcal{L}_i)=A(v_{\mathcal{X}_{i,0}})-\delta S(v_{\mathcal{X}_{i,0}})\geq \epsilon J(\mathcal{X}_i,\mathcal{L}_i)=\epsilon (T(v_{\mathcal{X}_{i,0}})-S(v_{\mathcal{X}_{i,0}})),$$
which contradicts $\lim_i \frac{A(v_{\mathcal{X}_{i,0}})}{S(v_{\mathcal{X}_{i,0}})}=\delta(X)$.
\end{proof}

\iffalse
In fact, the above proof implies that the twisted threshold is exactly $\delta(X)$ \cite{BLZ19}.

\begin{theorem}
Let X be a klt Fano variety with $\delta(X)\leq 1$, then
$$\sup \left\{\mu\in \mathbb{R}^+| \text{$X$ is $\mu$-twisted K-semistable}\right\}=\delta(X).$$
\end{theorem}
\fi

\section{Optimal Destabilization Conjecture}
\label{section5}

It has long been expected that uniform K-stability is equivalent to K-stability. In \cite{BX18}, they reduced the problem to the existence of divisorial $\delta$-minimizer for $\delta(X)=1$, that is, the divisorial valuation computing $\delta$-invariant. The algebraic twisted K-stability theory has been established to study K-unstable Fano varieties \cite{BLZ19},   then the case $\delta<1$ can be studied in parallel to the case $\delta=1$. In this section, we will explain the relation between the following two conjectures.

\begin{conjecture}{\rm (Optimal Destabilization Conjecture)}\label{optimal <1}
Let X be a $\bQ$-Fano variety with $\delta(X)\leq 1$, then there exists a divisor $E$ over $X$ computing $\delta(X)$, i.e.  $\frac{A(E)}{S(E)}=\delta(X).$
\end{conjecture}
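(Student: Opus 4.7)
The plan is to produce a divisorial minimizer of $A/S$ as a limit of a carefully chosen sequence of bounded valuations. The three natural steps are: (i) reduce the infimum defining $\delta(X)$ to a bounded class of lc places of complements, (ii) extract a quasi-monomial limit that still computes $\delta(X)$, and (iii) upgrade the quasi-monomial limit to a divisorial valuation.

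For step (i), I would use Theorem \ref{lc complement} (together with its extension to the case $\delta(X)<1$ via the twisted framework of Section \ref{section4}, in the spirit of the remark following that theorem) to show that $\delta(X)$ is realized as the infimum of $A/S$ over divisors $E$ appearing as lc places of lc complements of the form $(X,\frac{1}{N}G)$ with $G\in|-NK_X|$, for some fixed integer $N$ depending only on $X$. Combined with the boundedness of complements \cite{Birkar-1} and ACC for lc thresholds \cite{HMX14}, the birational models $Y\to X$ extracting such lc places then lie in a bounded family.

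For step (ii), take a sequence $E_i$ with $A(E_i)/S(E_i)\to\delta(X)$ drawn from this restricted class. By step (i), after passing to a subsequence one can realize all $E_i$ as components of a fixed snc divisor on a single log smooth model $Y\to X$, and the valuations $\ord_{E_i}$ then accumulate (in the valuation topology) to a quasi-monomial valuation $v$ on a face of the associated dual simplicial cone. Lower semicontinuity of the log discrepancy together with continuity of $S$ along such a face (cf.\ \cite{BJ17}) would give $A(v)/S(v)=\delta(X)$, so $v$ is a quasi-monomial minimizer.

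Step (iii) is the main obstacle, and is precisely why Conjecture \ref{optimal <1} remains open in this generality. Upgrading the quasi-monomial minimizer $v$ to a divisorial one amounts to finite generation of the associated graded algebra of $v$, a very delicate property. A natural strategy, foreshadowed by Theorem \ref{equivalence}, is to exploit the twisted framework: by Theorem \ref{maximal twist}, $X$ is $\delta(X)$-twisted K-semistable but not $\delta(X)$-twisted uniformly K-stable, so a nontrivial destabilizing object must appear at the boundary; if one could run an analogue of the special test configuration theory of \cites{LX14,BLZ19} starting from $v$, the central fiber of the resulting twisted special degeneration would yield the desired divisorial minimizer. This is exactly why Theorem \ref{equivalence} reduces Conjecture \ref{optimal <1} to the equivalence of twisted K-stability and twisted uniform K-stability, rather than resolving it outright.
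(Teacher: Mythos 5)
The statement you were asked to prove is Conjecture~\ref{optimal <1}, and the paper does not prove it: it is labeled a conjecture precisely because it is open. What the paper actually proves (Theorem~\ref{equivalence}, building on Theorem~\ref{maximal twist} and the twisted special test configuration theory of \cite{BLZ19}) is that Conjecture~\ref{optimal <1} is \emph{equivalent} to Conjecture~\ref{tuks=tks}. Your write-up gets this right: you explicitly flag step~(iii) as the genuine obstruction (upgrading a quasi-monomial minimizer to a divisorial one, i.e.\ finite generation of its associated graded ring) and correctly identify Theorem~\ref{equivalence} as a reduction rather than a resolution. So there is no hidden gap in your submission to call out --- there is simply no proof to reconstruct.

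Two remarks on the sketch itself, in case it is useful. Step~(i) is indeed available in the form you want: the extension of Theorem~\ref{lc complement} to $\delta(X)<1$ is \cite[Corollary~3.6]{BLX19}, as the paper notes in the remark following that theorem. Step~(ii) is stated more optimistically than is warranted: boundedness of the complements $(X,\tfrac{1}{N}G)$ does not by itself place all the extracting models $Y_i\to X$, or all the $E_i$, on a \emph{single} log smooth model, since the $G_i$ vary and their lc places are not controlled by a fixed resolution; producing a quasi-monomial limit computing $\delta(X)$ takes a genuinely different (generic limit / Zariski closure) argument rather than a dual-complex limit on one model. But this is a side issue --- as you say, the real difficulty is step~(iii), and it was open at the time this paper was written.
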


\begin{conjecture}\label{tuks=tks}
Let X be a $\bQ$-Fano variety with $\delta(X)\leq 1$, and $0<\mu\leq 1$, then X is $\mu$-twisted K-stable is equivalent to that X is $\mu$-twisted uniformly K-stable.
\end{conjecture}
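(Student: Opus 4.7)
The plan is to reduce Conjecture \ref{tuks=tks} to Conjecture \ref{optimal <1} (Optimal Destabilization); the guiding intuition is that the whole content of the conjecture is concentrated at the critical value $\mu = \delta(X)$, where the existence of a destabilizing test configuration is controlled by the existence of a divisorial $\delta$-minimizer. First I would isolate this critical case using Theorem \ref{maximal twist}. For $\mu < \delta(X)$ the theorem gives $\mu$-twisted uniform K-stability, which implies $\mu$-twisted K-stability, so the equivalence is automatic. For $\mu > \delta(X)$ the twisted valuative criterion of \cite{BLZ19} together with the definition of $\delta(X)$ produces a divisorial valuation $E$ with $A(E) < \mu \cdot S(E)$, from which the twisted special test configuration theory yields a test configuration destabilizing $X$ in the $\mu$-twisted sense; thus $X$ is not $\mu$-twisted K-semistable and the equivalence is again trivial. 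This leaves the critical case $\mu = \delta(X)$, where Theorem \ref{maximal twist} asserts $X$ is $\delta$-twisted K-semistable but not $\delta$-twisted uniformly K-stable, so Conjecture \ref{tuks=tks} at this value of $\mu$ reduces to the single assertion that $X$ is \emph{not} $\delta(X)$-twisted K-stable.

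It now remains to show this failure is equivalent to Conjecture \ref{optimal <1}. For the easy direction, given a divisorial $\delta$-minimizer $E$, I would show $E$ is dreamy by a Lemma \ref{lem:dreamy} type argument (perturbing by a small multiple of a general $D \in |-K_X|_\bQ$ and extracting $E$ via \cite[Corollary 1.4.3]{BCHM10}) and then use $E$ to induce a non-trivial test configuration $(\cX,\cL)$. Computing as in the proof of Theorem \ref{maximal twist}, for sufficiently general $D$ one has
\[
\Fut_{1-\delta}(\cX,\cL) \;=\; A(v_{\cX_0}) - \delta \cdot S(v_{\cX_0}) \;=\; 0,
\]
which certifies that $X$ is not $\delta(X)$-twisted K-stable. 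For the converse, a non-trivial normal test configuration $(\cX,\cL)$ with $\Fut_{1-\delta}(\cX,\cL) = 0$ should be replaced by a non-trivial special test configuration with the same vanishing property via the twisted special test configuration theory of \cite[Theorem 1.6]{BLZ19}; its central fiber valuation $v_{\cX_0}$ then satisfies $A(v_{\cX_0}) = \delta \cdot S(v_{\cX_0})$, producing a divisorial $\delta$-minimizer.

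The main obstacle will be this last step of the converse direction: guaranteeing that the twisted MMP procedure of \cite{BLZ19} does not strictly decrease $\Fut_{1-\delta}$, so that the resulting special test configuration still has vanishing twisted Futaki invariant. The $\delta$-twisted K-semistability of $X$ (Theorem \ref{maximal twist}) forces $\Fut_{1-\delta} \ge 0$ for every normal test configuration, and the MMP steps in \cite{BLZ19} should be non-increasing on this invariant; combining the two facts would pin it at zero along the entire MMP and on the final special model. One also has to be careful that the twisting divisor $\cD$ is transported correctly under the MMP, since the supremum defining $\Fut_{1-\delta}$ is taken over all $D \in |-K_X|_\bQ$. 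Once this monotonicity is in place, the two conjectures are tied together, and any future proof of Conjecture \ref{optimal <1} will yield Conjecture \ref{tuks=tks}.
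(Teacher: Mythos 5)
Your reduction is essentially the paper's own treatment: the case analysis via Theorem \ref{maximal twist}, collapsing the conjecture to the single assertion that $X$ is not $\delta(X)$-twisted K-stable, is exactly the paper's unlabelled lemma in Section \ref{section5}, and the identification of that assertion with the existence of a divisorial $\delta$-minimizer (dreaminess of minimizers producing a test configuration with vanishing twisted Futaki invariant, and conversely the twisted special test configuration theory of \cite{BLZ19} extracting a minimizer from such a test configuration) is the paper's Theorem \ref{equivalence}. As you correctly acknowledge, this is a reduction rather than a proof --- the statement remains conditional on Conjecture \ref{optimal <1}, precisely as in the paper.
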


By Theorem \ref{maximal twist}, we first have the following lemma as a direct corollary:

\begin{lemma}
Conjecture \ref{tuks=tks} is equivalent to that X is not $\delta(X)$-twisted K-stable.
\end{lemma}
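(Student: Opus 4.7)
The lemma reads as a direct corollary of Theorem \ref{maximal twist}, and I plan to prove both implications by a case analysis on where $\mu$ sits relative to $\delta(X)$.

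For the forward direction, I would assume Conjecture \ref{tuks=tks} and specialize it to $\mu=\delta(X)\in(0,1]$. The equivalence furnished by the conjecture then says $X$ is $\delta(X)$-twisted K-stable iff $X$ is $\delta(X)$-twisted uniformly K-stable. Since Theorem \ref{maximal twist} explicitly tells us $X$ is \emph{not} $\delta(X)$-twisted uniformly K-stable, it cannot be $\delta(X)$-twisted K-stable either.

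For the converse, I would assume $X$ is not $\delta(X)$-twisted K-stable and verify that $\mu$-twisted K-stability and $\mu$-twisted uniform K-stability coincide for every $\mu\in(0,1]$. I split into three ranges. When $0<\mu<\delta(X)$, Theorem \ref{maximal twist} gives $\mu$-twisted uniform K-stability outright, which formally implies $\mu$-twisted K-stability, so both hold. When $\mu=\delta(X)$, Theorem \ref{maximal twist} gives that $X$ is not $\delta(X)$-twisted uniformly K-stable and the standing hypothesis gives that it is not $\delta(X)$-twisted K-stable, so both fail. When $\delta(X)<\mu\leq 1$ (only relevant if $\delta(X)<1$), the special test configurations $(\cX_i,\cL_i)$ built in the proof of Theorem \ref{maximal twist} satisfy $A(v_{\cX_{i,0}})/S(v_{\cX_{i,0}})\to\delta(X)<\mu$, and since ${\rm Fut}_{1-\mu}(\cX_i,\cL_i)=A(v_{\cX_{i,0}})-\mu S(v_{\cX_{i,0}})$ becomes negative for $i$ large, $X$ fails even $\mu$-twisted K-semistability, so neither $\mu$-twisted K-stability nor $\mu$-twisted uniform K-stability holds.

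Since every step reduces to applying Theorem \ref{maximal twist} or to the trivial fact that (twisted) uniform K-stability implies K-stability implies K-semistability, I do not foresee any genuine obstacle. The only small item is the regime $\mu>\delta(X)$, which Theorem \ref{maximal twist} does not directly cover, but the destabilizing sequence already built inside its proof settles that case immediately.
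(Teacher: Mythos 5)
Your proof is correct and spells out exactly the case analysis that the paper leaves implicit when it asserts the lemma as a ``direct corollary'' of Theorem \ref{maximal twist}. The only point requiring real care is the regime $\delta(X)<\mu\leq 1$, which is not literally contained in the statement of Theorem \ref{maximal twist}; your fix, extracting the destabilizing sequence $(\cX_i,\cL_i)$ from its proof and noting that $\mathrm{Fut}_{1-\mu}(\cX_i,\cL_i)=A(v_{\cX_{i,0}})-\mu S(v_{\cX_{i,0}})<0$ for $i\gg 0$ once $\mu>\delta(X)$, is valid and amounts to invoking the twisted valuative criterion of \cite{BLZ19} to rule out even $\mu$-twisted K-semistability there.
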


% \begin{proof}
% It's a direct corollary of theorem \ref{maximal twist}.
% \end{proof}

The above two conjectures are equivalent by following result:

\begin{theorem}\label{equivalence}
Conjecture \ref{optimal <1} is equivalent to Conjecture \ref{tuks=tks}.
\end{theorem}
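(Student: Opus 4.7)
The plan is to use the preceding Lemma to reduce Conjecture \ref{tuks=tks} to the single assertion that $X$ is not $\delta$-twisted K-stable, where we write $\delta := \delta(X)$. The theorem thereby becomes the equivalence: a divisorial valuation over $X$ computing $\delta$ exists if and only if $X$ is not $\delta$-twisted K-stable.

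For the direction $\Rightarrow$, let $E$ be a divisorial valuation over $X$ with $A(E)/S(E) = \delta$. The key step is to realize $\ord_E$, up to rescaling, as the valuation $v_{\cX_0}$ induced by some nontrivial special test configuration $(\cX,\cL)$ of $X$. For $\delta = 1$ this is exactly the content of \cite{BX18}; for $\delta < 1$ the parallel statement is furnished by the twisted degeneration theory developed in \cite{BLZ19}. Granted such a $(\cX,\cL)$, we pick a general $D \in |-K_X|_{\bQ}$ whose support avoids the center of $v_{\cX_0}$. The Futaki computation performed inside the proof of Theorem \ref{maximal twist} then yields
$$\Fut_{1-\delta}(\cX,\cL) \;=\; A(v_{\cX_0}) - \delta\, S(v_{\cX_0}) \;=\; 0,$$
so $X$ fails to be $\delta$-twisted K-stable.

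For the direction $\Leftarrow$, suppose $X$ is not $\delta$-twisted K-stable. By definition some nontrivial normal test configuration $(\cX,\cL)$ satisfies $\Fut_{1-\delta}(\cX,\cL) \le 0$. Applying \cite[Theorem 1.6]{BLZ19} (the twisted analogue of the Li--Xu special test configuration theorem invoked earlier), we may replace $(\cX,\cL)$ by a nontrivial special test configuration without increasing $\Fut_{1-\delta}$. The same Futaki identity used in the previous direction then gives
$$A(v_{\cX_0}) - \delta\, S(v_{\cX_0}) \;=\; \Fut_{1-\delta}(\cX,\cL) \;\le\; 0,$$
hence $A(v_{\cX_0})/S(v_{\cX_0}) \le \delta$. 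Combined with $\delta = \inf_E A(E)/S(E)$, this forces equality, and $v_{\cX_0}$ is the desired divisorial $\delta$-minimizer.

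The principal obstacle lies in the extraction step of direction $\Rightarrow$: one must show that every divisorial $\delta$-minimizer is, up to rescaling, induced by a special test configuration. The construction should parallel the argument in the proof of Theorem \ref{weakly special}: extract $E$ on a Fano type birational model via \cite[Corollary 1.4.3]{BCHM10}, pass to a product family over $\bA^1$ and consider the quasi-monomial valuation of weight $(1,1)$ along $X_0$ and the lift of $E$, run an MMP$/\bA^1$ that contracts the strict transform of $X_0$, then take the ample model. The delicate point is to upgrade the resulting central fiber from merely lc (weakly special) to plt (special); this is where the minimizing hypothesis $A(E)/S(E)=\delta$ must enter decisively, in parallel with the corresponding step in \cite{BX18}.
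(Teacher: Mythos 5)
Your argument is essentially correct and follows the same route as the paper's proof; the main difference is that the paper directly cites two results from \cite{BLZ19} which you instead try to unpack. The ``principal obstacle'' you flag in the forward direction --- that a divisorial $\delta$-minimizer is dreamy and, up to rescaling, the central fiber valuation of a special test configuration with $\Fut_{1-\delta}=0$ --- is precisely the content of \cite[Theorem 1.1]{BLZ19}, which the paper cites as a black box rather than re-proving, so the delicate ``lc to plt'' upgrade you worry about is already handled there. In the backward direction, the paper cites \cite[Theorem 3.9]{BLZ19}, which packages the reduction to a special test configuration together with the conclusion that its central fiber computes $\delta(X)$; your chain of reasoning using \cite[Theorem 1.6]{BLZ19}, the Futaki identity $\Fut_{1-\delta}(\cX,\cL)=A(v_{\cX_0})-\delta S(v_{\cX_0})$ for general $D$, and $\delta=\inf_E A(E)/S(E)$ is a correct unwinding of the same content. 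One small point you should make explicit: after replacing by a special test configuration with $\Fut_{1-\delta}\le 0$, you need to know the result is still \emph{non-trivial} (otherwise $v_{\cX_0}$ is the trivial valuation, not a divisor, and the inequality $A/S\le\delta$ is vacuous); this is part of what the cited theorems in \cite{BLZ19} guarantee, but it is worth stating. Also, the paper's version uses $\delta$-twisted K-semistability (from Theorem~\ref{maximal twist}) to upgrade $\Fut_{1-\delta}\le 0$ to equality, whereas your appeal to $\delta=\inf_E A(E)/S(E)$ accomplishes the same thing more directly --- a minor but clean alternative.
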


\begin{proof}
We first assume Conjecture \ref{optimal <1}, i.e. there is a divisor $E$ computing $\delta=\delta(X)$, then by \cite[Theorem 1.1]{BLZ19}, $E$ is a dreamy divisor over $X$ which naturally induces a non-trivial test configuration $(\mathcal{X},\mathcal{L})$ such that ${\rm Fut}_{1-\delta}(\mathcal{X},\mathcal{L})=0$, thus $X$ is not $\delta$-twisted K-stable. Conversely, assume $X$ is not $\delta$-twisted K-stable, then there exists a non-trivial test configuration $(\mathcal{X},\mathcal{L})$ such that ${\rm Fut}_{1-\delta}(\mathcal{X},\mathcal{L})=0$. By \cite[Theorem 3.9]{BLZ19},  it must be a special test configuration  whose central fiber induces a divisorial valuation computing $\delta(X)$.
\end{proof}

We can also translate optimal destabilization conjecture into vanishing of $\delta$-twisted generalized Futaki invariant \cite{BLZ19}.

\begin{theorem}
Suppose X is a klt Fano variety with $\delta(X)\leq 1$. If there is a divisor E over X computing $\delta(X)$, i.e $\frac{A(E)}{S(E)}=\delta(X)$, then there is a test configuration $(\mathcal{X},\mathcal{L})$ such that ${\rm Fut}_{1-\delta}(\mathcal{X},\mathcal{L})=0$. Conversely, if there is a test configuration $(\mathcal{X},\mathcal{L})$ such that ${\rm Fut}_{1-\delta}(\mathcal{X},\mathcal{L})=0$, then there is a divisor E over X computing $\delta(X)$.
\end{theorem}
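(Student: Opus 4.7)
The plan is to bridge the valuative formulation (existence of a divisorial $\delta$-minimizer) and the test-configuration formulation (vanishing of ${\rm Fut}_{1-\delta}$) by routing the argument through the twisted special test configuration theory of \cite{BLZ19}, exactly mirroring the proof of Theorem \ref{equivalence} but replacing the sentence ``not $\delta$-twisted K-stable'' by the explicit vanishing identity.

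For the forward direction, I would begin by taking a divisor $E$ with $A(E)/S(E)=\delta:=\delta(X)$. By \cite[Theorem 1.1]{BLZ19}, a divisorial $\delta$-minimizer is automatically a dreamy (in fact, special) divisor, so it induces a non-trivial special test configuration $(\mathcal{X},\mathcal{L})$ whose central fiber valuation $v_{\mathcal{X}_0}$, restricted to $K(X)$, is proportional to $\ord_E$. By \cite[Theorem 3.7]{BLZ19} one can then choose a general $D\in |-K_X|_\bQ$ avoiding the center of $v_{\mathcal{X}_0}$, so that the supremum defining ${\rm Fut}_{1-\delta}(\mathcal{X},\mathcal{L})$ is realized by $D$ and equals ${\rm Fut}(\mathcal{X},(1-\delta)\mathcal{D};\mathcal{L})$. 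Applying the same valuative identity that was already used in the proof of Theorem \ref{maximal twist}, namely
\[
{\rm Fut}_{1-\delta}(\mathcal{X},\mathcal{L}) = A(v_{\mathcal{X}_0}) - \delta\, S(v_{\mathcal{X}_0}),
\]
and then rescaling to $\ord_E$, this reduces to $A(E)-\delta S(E)=0$ by hypothesis.

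For the converse, I start from a (necessarily non-trivial) test configuration $(\mathcal{X},\mathcal{L})$ with ${\rm Fut}_{1-\delta}(\mathcal{X},\mathcal{L})=0$. Since $\delta\le 1$ and $\delta=\inf_v A(v)/S(v)$, every normal test configuration satisfies ${\rm Fut}_{1-\delta}\ge 0$ on its central fiber valuation, and the corresponding twisted MMP/special test configuration reduction \cite[Theorem 3.9]{BLZ19} produces a special test configuration $(\mathcal{X}',\mathcal{L}')$ for which the same vanishing still holds. The central fiber then cuts out a divisorial valuation $c\cdot \ord_E$ on $K(X)$, and plugging this valuation into the identity above gives $A(E)-\delta S(E)=0$, so $E$ is the desired divisorial $\delta$-minimizer.

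The main obstacle, already dispatched in \cite{BLZ19}, is precisely this reduction step in the converse direction: without it, vanishing of ${\rm Fut}_{1-\delta}$ on a merely normal test configuration does not localize to a divisorial valuation on $X$, and moreover the supremum over $D$ could in principle be spoiled by coincidences between the center of $D$ and the center of $v_{\mathcal{X}_0}$. Once these two technical inputs from \cite{BLZ19} are granted, the remaining content is the direct valuative translation ${\rm Fut}_{1-\delta}(\mathcal{X},\mathcal{L}) = A(v_{\mathcal{X}_0}) - \delta S(v_{\mathcal{X}_0})$, which I would import verbatim from the proof of Theorem \ref{maximal twist}.
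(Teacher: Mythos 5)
Your proposal is correct and takes essentially the same route as the paper: both directions go through the twisted special test configuration machinery of \cite{BLZ19}, with the forward direction invoking \cite[Theorem 1.1]{BLZ19} (minimizers are dreamy/special) and the converse reducing to a special test configuration and reading off the central fiber valuation. The paper cites \cite[Theorem 4.6]{BLZ19} for the converse where you cite \cite[Theorem 3.9]{BLZ19}, but this matches what the paper itself uses in the proof of Theorem \ref{equivalence} and does not change the argument; your added detail (the choice of general $D$ via \cite[Theorem 3.7]{BLZ19} and the explicit identity ${\rm Fut}_{1-\delta}(\mathcal{X},\mathcal{L}) = A(v_{\mathcal{X}_0}) - \delta S(v_{\mathcal{X}_0})$) simply unpacks what the paper leaves implicit.
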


\begin{proof}
Suppose there is a divisor $E$ computing $\delta(X)$, then $E$ is a dreamy divisor which naturally induces a test configuration whose $\delta$-twisted generalized Futaki is zero, by \cite[Theorem 1.1]{BLZ19}. Conversely, if there is a test configuration whose $\delta$-twisted generalized Futaki is zero, then it must be a special test configuration whose central fiber induces a divisorial valuation computing $\delta(X)$, by \cite[Theorem 4.6]{BLZ19}.
\end{proof}

\begin{remark}
The first two conjectures in this section for $\delta(X)=1$ correspond to the following two conjectures:
\begin{enumerate}
\item {\rm (Optimal Destabilization Conjecture for $\delta=1$)}
Suppose $X$ is a $\bQ$-Fano variety with $\delta(X)=1$, then there is a divisorial valuation ${\rm{ord}}_E$ computing $\delta(X)$, i.e. $\delta(X)=\frac{A(E)}{S(E)}=1$.
\item For Fano varieties, uniform K-stability is equivalent to K-stability.
\end{enumerate}
By Theorem \ref{equivalence}, we know they are also equivalent.
\end{remark}

\bibliography{reference.bib}

\end{document}